\newcommand\N{{\mathbb N}}
\newtheorem{theorem}{Theorem}[section]
\newtheorem{corollary}[theorem]{Corollary}
\newtheorem{lemma}[theorem]{Lemma}
\newtheorem{proposition}[theorem]{Proposition}
\newtheorem{definition}[theorem]{Definition}
\newtheorem{example}[theorem]{Example}
\newtheorem{remark}[theorem]{Remark}
\newcommand{\lt}{ \triangleleft }
\begin{document}
	\title[Lie TSD objects]{$3$-Lie Algebras, Ternary Nambu-Lie algebras and the Yang-Baxter equation}
	
	\author{Viktor Abramov} 
	\address{Institute of Mathematics and Statistics, University of Tartu\\
		Narva mnt 18, 51009 Tartu, Estonia} 
	\email{viktor.abramov@ut.ee}
	
	\author{Emanuele Zappala}
	\address{Institute of Mathematics and Statistics, University of Tartu\\
	Narva mnt 18, 51009 Tartu, Estonia.
	Current address: Yale University} 
	\email{zae@usf.edu\\ emanuele.zappala@yale.edu}
	
	\maketitle
	
	\begin{abstract}
		We construct ternary self-distributive (TSD) objects from compositions of binary Lie algebras, $3$-Lie algebras and, in particular, ternary Nambu-Lie algebras. We show that the structures obtained satisfy an invertibility property resembling that of racks. We prove that these structures give rise to Yang-Baxter operators in the tensor product of the base vector space and, upon defining suitable twisting isomorphisms, we obtain representations of the infinite (framed) braid group. We consider examples for low-dimensional Lie algebras, where the ternary bracket is defined by composition of the binary ones, along with simple $3$-Lie algebras. We show that the Yang-Baxter operators obtained are not gauge equivalent to the transposition operator, and we consider the problem of deforming the operators to obtain new solutions to the Yang-Baxter equation. We discuss the applications of this deformation procedure to the construction of (framed) link invariants. 
	\end{abstract}

\noindent 
Keywords:  Self-distributivity, framed braid group, Yang-Baxter operator, link invariant.\\
\noindent
MSC codes: 17B38, 	57K12, 18M15.

\tableofcontents

	\section{Introduction}
	
	Yang-Baxter operators, i.e. solutions to the Yang-Baxter (YB), or braid, equation, have long been known to produce invariants of knots and links, and $3$-manifold invariants  \cite{Oht,Tur}. More recently, self-distributive structures such as racks and quandles have been shown to be suitable to construct link and knotted surface invariants \cite{CJKLS}, as well as manifold invariants \cite{Yetter}, via their cohomology theories. On the other hand, ternary and higher operations have been employed in physics to generalize Hamiltonian mechanics \cite{Nambu} and to derive related field theories in theoretical physics \cite{BL,BL2,DFMR}.
	
	 The purposes of this article are two-fold. First, solutions to the Yang-Baxter equation will be derived from ternary Nambu-Lie algebras. Second, link invariants are constructed from such solutions. The main step in the construction is that of obtaining ternary self-distributive (TSD) objects in the category of vector spaces, in the sense of \cite{ESZ} Section~8, satisfying extra conditions that are sufficient to establish the invertibility of the associated operators. These properties are a vector space analogue of the axioms for a ternary {\it rack}. In the binary case, it is well known that a rack, e.g. a {\it quandle}, produces solutions of the YBE by linearization, explicitly the operator is obtained from $x\otimes y \mapsto y\otimes x*y$. A doubled construction, from set-theoretical results found in \cite{ESZ}, allows to define a YB operator on $\mathbbm k\langle X\rangle \otimes \mathbbm k\langle X\rangle$, where $X$ is a ternary rack and $\mathbbm k\langle X\rangle$ denotes the linearization of the set $X$ over $\mathbbm k$. Repetitions of elements in this case are to be thought of as applying a comultiplication that linearizes the map $x\mapsto x\otimes x$. In general, there exist TSD objects in symmetric monoidal categories, such as the category of vector spaces, whose comultiplication is more general than the one given above. %We study a large class of objects of this type in the present article. 
	For instance, let us consider the heap operation of a group $G$, defined as the map $(x,y,z)\mapsto xy^{-1}z$, where juxtaposition denotes multiplication in $G$. Then, this operation is known to be TSD, and its linearization (on the group algebra) defines a TSD object in the category of vector spaces. More generally, in a Hopf algebra, extending the map $x\otimes y\otimes z \mapsto xS(y)z$ by linearity, where $S$ denotes the antipode, defines a TSD object with respect to the comultiplication of the Hopf algebra. The heap map, in addition, is known to be indecomposable into binary SD operations \cite{ESZ}, and it is therefore an SD operation that is not obtained by composition of lower arity operations.
	
	Along with YB operators, we show that the TSD objects studied herein naturally give a notion of twisting morphism which is compatible with the YB operators in the sense that they commute. As a consequence we can derive a representation of the infinite framed braid group which allows us to define associated framed link invariants. 
	
	 The fundamental paradigm employed here is the following. We start with a coalgebra object $(X,\Delta)$ in a linear symmetric monoidal category, endowed with the extra structrure of a Lie algebra object. We construct a TSD operation using the Lie bracket and, making use of comultiplication and symmetry morphisms, we show that this TSD operation gives rise to a Yang-Baxter operator. Observe that the notion of TSD operation contains, by definition, a compatibility condition between comultiplication, symmetry morphisms and Lie bracket as well. Proper definitions will be provided in Section~\ref{sec:pre}. For the sake of simplicity, throughout this article, we restrict our attention to the case of the category of vector spaces. The results of this paper can be generalized to arbitrary linear symmetric monoidal categories in a fashion similar to \cite{EZ}, where the proofs have been carried out for heap objects in arbitrary symmetric monoidal categories. However, to have link invariants further axioms on the underlying category are required (e.g. existence of the trace), as in \cite{EZ}.
	
	This article, in fact, studies and widely expands a class of examples considered in \cite{EZ}, where Lie algebras with ternary operations defined by compositions of brackets are seen to provide TSD objects in the symmetric monoidal category of vector spaces. Here, we show that $3$-Lie algebras, and in particular ternary Nambu-Lie algebras, are also suitable for the purpose of constructing TSD objects. 
	
	We note that our algebraic results do not require further regularity assumptions, such as simplicity or semi-simplicity of the Lie algebras, but only depend on the defining axioms of (Nambu-)Lie algebras. In fact, one can see that the TSD objects of this article determine a representation of the framed braid group also when no assumption is made on the dimensionality of the underlying Lie algebra. In other words, the ternary (Nambu-)Lie algebra we start with, can be taken to be infinite.
%	 In order to apply the construction to obtain framed link invariants it is necessary, though, to restrict our attention to objects that satisfy further conditions. In particular, some finiteness condition is needed, in the sense that the trace of the associated operators converges in the ground field (or ring). Clearly, dealing with a finite dimensional Lie algebra is the easiest way to ensure the well definedness of trace. 
%	

	The article is organized as follows. In Section~\ref{sec:pre} we recall basic definitions that will be used throughout the paper. In Section~\ref{sec:LieTSD} we consider the case of TSD objects associated to compositions of binary Lie brackets, and construct the associated YB operators. Section~\ref{sec:ternaryLie} is dedicated to the case of ternary (Nambu-)Lie algebras and their corresponding YB operators. In Section~\ref{sec:invariants} we define twisting morphisms, for the TSD objects given in the previous sections, such that to obtain representations of the infinite framed braid group $\mathbb{FB}_\infty$. We consider the problem of deforming the YB operators and the effect of such deformations on the associated framed link invariants in some special cases for low-dimensional Lie algebras (with TSD operation induced by composition of binary operations and by ternary operations), and some links with small number of crossings.  

\bigskip 
{\noindent{\bf Acknowledgements.} EZ has been funded by the Estonian Research Council through the grant: MOBJD679. The authors would like to thank M. Saito and M. Elhamdadi for useful suggestions, and the referee for valuable comments that have helped to improve the article significantly.}
	
	\section{Preliminaries}\label{sec:pre}
	
	In this section we review some relevant material, and set some notation for the rest of the article. 
	
	\subsection{$3$-Lie algebras and ternary Nambu-Lie algebras}
	
	Let $X$ be a vector space over the field $\mathbbm k$ and let $[\bullet, \bullet, \bullet] : X^{\times 3} \longrightarrow X$ denote a trilinear map, where $\bullet$ is a placeholder. We say that $X$ is a $3$-Lie algebra, and that $[\bullet, \bullet, \bullet]$ is a ternary Lie bracket, if the equations
	$$
	[x_1, x_2, x_3] = (-1)^{|\sigma|} [x_{\sigma(1)}, x_{\sigma(2)}, x_{\sigma(3)}] 
	$$
	and 
	\begin{eqnarray*}
	\lefteqn{[[x_1,x_2,x_3],x_4,x_5]}\\
	&=& [[x_1,x_4,x_5],x_2,x_3] + [x_1,[x_2,x_4,x_5],x_3] + [x_1,x_2,[x_3,x_4,x_5]]
	\end{eqnarray*}
	hold for any permutation $\sigma\in \mathbb S_3$ and all $x_1, x_2, x_3, x_4, x_5\in X$, where $|\sigma|$ is the sign of $\sigma$. In other words, the bracket is skew-symmetric, and it satisfies the so called {\it Filippov identity}, which is a generalization of the ``usual'' Jacobi identity. It is possible to generalize this definition to the case of $n$-ary Lie algebras, i.e. $n$-Lie algebras. These structures, were introduced by Filippov in \cite{Filippov}. A special class of $n$-Lie algebras has been considered by Nambu in \cite{Nambu} to generalize the classical formulation of Hamiltonian Mechanics to $n$-ary operations. The bracket used by Nambu is defined as the Jacobian of $n$ functions, and these structures are now often referred to as {\it Nambu-Lie algebras}. Explicitly, let $X$ denote an associative algebra over a unital ring $R$. Let us fix derivations $D_1, \ldots , D_n$ of $X$, such that they commute with each other. Let $x_1, \ldots ,x_n$ be elements of $X$, then the $n$-bracket is defined as 
	$$
	[x_1, \ldots , x_n] = J(x_1, \ldots , x_n),
	$$ 
	where $J(x_1, \ldots, x_n)$ is the determinant of the matrix $C = (c_{ij})$ defined by $D_ix_j = c_{ij}$.  The fact that the $n$-bracket so defined satisfies the Filippov identity, and it therefore is an $n$-Lie algebra, is not straightforward. A proof can be found in \cite{Filippov}, Proposition~2. 
	
	%We note, also, that upon taking $X$ to be the polynomial algebra $\mathbb C[x_1,\ldots, x_n]$ along with partial derivatives $\partial/\partial x_i$, the previous construction can be applied directly. In fact, the partial derivatives satisfy the the Leibniz rule, i.e. they are derivations, and they are also known to commute with each other (Schwarz's theorem).
	
	While in the original article of Nambu it was not shown that the Filippov identity was satisfied, these structures were  the main class of examples of Filippov in \cite{Filippov}. We observe that other authors sometimes use different names to indicate $n$-Lie algebras and Nambu-Lie algebras. So, our nomenclature might not coincide with that of other references. 
	
	Over algebraically closed fields of characteristic zero, simple $n$-Lie algebras have been completely classified \cite{Ling}, and it is known that they are the Lie algebras $A_{n+1}$ introduced by Filippov in \cite{Filippov} (see Proposition~1 therein). We observe moreover, as pointed out in \cite{Poz}, that most of the known simple $n$-Lie algebras are all subalgebras of Nambu-Lie algebras, in the sense that they can be embedded as subalgebras in the algebra of functions with the bracket induced by the Jacobian, as described above. In characteristic $p$, there exist examples of finite dimensional simple $n$-Lie algebras that differ from the simple ones in characteristic  zero. Examples of these structures can be found for instance in \cite{Poz}.
	
	\begin{remark}
		{\rm 
		Observe that the original definition of $n$-Lie algebra was given for modules over a unitary ring, instead of vector spaces as given above. In fact, although we refer to Lie algebras over a field in the rest of the article, one could replace field by unitary ring without incurring in any problems.
	}
	\end{remark}
	
	\subsection{Knot-theoretic notions}
	A {\it rack}, is a non-empty set $X$ endowed with a binary operation $*: X\times X\longrightarrow X$ satisfying the following axioms
	\begin{itemize}
		\item[(i)] 
	  For each $x\in X$, the right multiplication map $\bullet*x: X\longrightarrow X$, where $\bullet$ indicates a placeholder, is a bijection;
	  \item[(ii)] 
	  Self-Distributivity: for all $x,y,z\in X$, we have $(x*y)*z = (x*z)*(y*z)$. 
	\end{itemize}
If, moreover, $*$ is idempotent, i.e. it satisfies $x*x = x$ for all $x\in X$, then $(X,*)$ is called {\it quandle}.

\begin{remark}
	{\rm 
It is well known that the axioms of racks are an algebraic version of Reidemeister moves II and III, respectively, while indempotence corresponds to Reidemeister move I. 	
}
\end{remark}

Quandles have been studied intensively from the 1980's, after Joyce \cite{Joy} and Matveev \cite{Mat} have independently shown that the {\it fundamental quandle} of a knot, defined via generators and relators using the Wirtinger presentation of the fundamental group, is a complete knot invariant (up to mirror image and orientation reversal). More recently, in \cite{CJKLS}, a cohomology theory of racks and quandles has been introduced, and a cohomological invariant of links and knotted surfaces has been constructed. 

The theory of racks/quandles and their cohomology theories have been generalized to operations of higher arities, see for instance \cite{Gre,ESZ}. Moreover, the notion of self-distributivity has been treated, in \cite{ESZ}, in arbitrary symmetric monoidal categories. Our main interest lies, in the present article, in self-distributive (SD) structures in the category of vector spaces. We will therefore focus on this class of SD operations. 

 Let $(X,\Delta, \epsilon)$ be a coalgebra in the category of vector spaces, and let $\Delta_3 = (\Delta\otimes \mathbbm 1)\circ \Delta$.
Recall that, a coalgebra $(X,\Delta, \epsilon)$ is a ternary self-distributive (TSD) object, if it is endowed with a coalgebra morphism $T: X\otimes X\otimes X\longrightarrow X$ such that the diagram 
\begin{center}
	\begin{tikzcd}
		&X^{\otimes 9}\arrow{dl}[swap]{\shuffle} & &X^{\otimes 5}\arrow{ll}[swap]{\mathbbm 1^{\otimes 3}\otimes \Delta_3^{\otimes 2}}\arrow {rd}{T\otimes \mathbbm{1}^{\otimes 2}} &  \\
		X^{\otimes 9}\arrow{dd}[swap]{T\otimes T \otimes T} & & & & X^{\otimes 3}\arrow{dd}{T}\\
		& & & &\\
		X^{\otimes 3}\arrow{rrrr}[swap]{T}& & & &X 
	\end{tikzcd} 
\end{center} 
commutes, where the morphism $\shuffle$ is defined as follows. On simple tensors of $X^{\otimes 9}$ it permutes the entries according to the permutation $\sigma =  (2,4)(3,7)(6,8)$, and it is therefore extended by linearity. 

In \cite{ESZ}, examples of TSD objects in vector spaces associated to (involutory) Hopf algebras via quantum heap or quantum conjugation operations, and to Lie algebras via composing certain binary SD operations originally found in \cite{CCES}, were introduced. The latter class of examples is recalled in detail in the next section, as it provides the starting point of the present article. 

\subsection{Framed Reidemeister moves and framed braid group}

A framing of a link $\mathcal L$ is a choice of a section of the normal bundle of $\mathcal L$. Framed links are represented diagrammatically by regular projections on the plane, where we thicken the arcs of the link to two parrallel arcs delimiting a ribbon. 
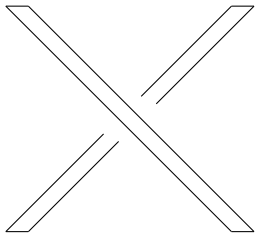
\begin{figure}
	\begin{center}
		\begin{tikzpicture}
		\draw (0,3) -- (3,0);
		\draw (0.3,3) -- (3.3,0);
		
		\draw (3,3) -- (1.8,1.8);
		\draw (1.3,1.3) -- (0,0);
		
		\draw (3.3,3) -- (2,1.7);
		\draw (1.5,1.2) -- (0.3,0);
		
		\draw (0,3) -- (0.3,3);
		\draw (3,3) -- (3.3,3);
		\draw (0,0) -- (0.3,0);
		\draw (3,0) -- (3.3,0);
		
		\end{tikzpicture}
	\end{center}
	\caption{Crossing between ribbon edges}
	\label{fig:crossing}
\end{figure}
 A crossing where the edges are thickened is shown in Figure~\ref{fig:crossing}. 
 %We represent full twists of the normal bundle of a link by self-crossings of the edges of a ribbon, as depicted in Figure~\ref{fig:twist}.
%	\begin{figure}[h!]
%		\begin{center}
%			\begin{tikzpicture}[scale=0.7]
%			\draw (1,0) parabola (2,4);
%			%%%%%%%%%%%%%
%			\draw (1.1,-0.3) parabola (2.3,4);	
%			%%%%%%%%
%			\draw (1,0) parabola (0.5, 1);
%			\draw (0.55,1.2) parabola (0.5,1);
%			\draw (1,1.5) parabola (0.55,1.2);
%			\draw (1,1.5) parabola (1.4,1.25);
%			%%%%%%%
%			\draw (1.1,-0.3) parabola (0.3, 1);
%			\draw (0.35,1.3) parabola (0.3,1);
%			\draw (0.8,1.7) parabola (0.35,1.3);
%			\draw  (0.8,1.7) parabola (1.45,1.45);
%			%%%%%%%%
%			\draw (1.9,0.95) parabola (2.1,-0.2);
%			\draw (2,-2) parabola (2.1,-0.2);
%			%%%%%%% 
%			\draw (2,1.1) parabola (2.3,0);
%			\draw (2.2,-2) parabola (2.3,0);
%			%%%%%%%%%
%			\node (a) at (3:4) {$\cong$};
%			%%%%%%%%
%			
%			\draw (6,4) -- (6,1);
%			\draw (6.3,4) -- (6.3,1);
%			
%			%%%%%%
%			
%    		\draw (5.5,1) -- (5.5,0);
%			\draw (6.8,1) -- (6.8,0);
%			\draw (5.5,0) -- (6.8,0);
%			
%			%%%%%%
%			
%			\draw (6,0) -- (6,-2);
%			\draw (6.3,0) -- (6.3,-2);
%			
%			%%%%%%
%			\node (a)  at (5:6.2) {$+1$};
%			
%			\draw (2,4)--(2.3,4);
%			\draw (2,-2)--(2.2,-2);
%			
%			\draw (6,-2)--(6.3,-2);
%			\draw (6,4)--(6.3,4);
%			\end{tikzpicture}
%		\end{center}
%		\caption{Self-crossing of a ribbon introduces a full twist, represented on the right by a $+1$ symbol}
%		\label{fig:twist}
%	\end{figure} 
	 The isotopy class of a framed link is characterized by a set of combinatorial moves called {\it framed Reidemeister moves} which we hereby briefly mention. See Theorem~1.8 and Figure~1.8 in \cite{Oht} for further details. Reidemeister moves II and III are unchanged for framed links upon thickening, for example, arcs of a diagram. Reiedemister move I is replaced by the annihilation of kinks, where positive (resp. negative) self-crossings followed by negative (resp. positive) self-crossings can be replaced by a straight arc. %This is shown in Figure~\ref{fig:kinks}.
%	 \begin{figure}
%	 	\begin{center}
%	 		\begin{tikzpicture}
%	 		\draw (0,4) -- (0,3.2);
%	 		\draw (0,2.8) -- (0,1.2);
%	 		\draw (0,0.8) -- (0,0);
%	 		
%	 		\draw (0.3,4) -- (0.3,3.2);
%	 		\draw (0.3,2.8) -- (0.3,1.2);
%	 		\draw (0.3,0.8) -- (0.3,0);
%	 		
%	 		\draw (-0.15,3.2) -- (0.45,3.2);
%	 		\draw (-0.15,2.8) -- (0.45,2.8);
%	 		\draw (-0.15,1.2) -- (0.45,1.2); 
%	 		\draw (-0.15,0.8) -- (0.45,0.8);
%	 		\draw (-0.15,0.8) -- (-0.15,1.2);
%	 		\draw (-0.15,2.8) -- (-0.15,3.2);
%	 		\draw (0.45,0.8) -- (0.45,1.2);
%	 		\draw (0.45,2.8) -- (0.45,3.2);
%	 		
%	 		\node (a) at (0.15,1) {$-1$};
%	 		\node (a) at (0.15,3) {$+1$};
%	 		
%	 		\node (a) at (2,2) {$\cong$};
%	 		
%	 		\draw (4,4) -- (4,0);
%	 		\draw (4.3,4) -- (4.3,0);
%	 		
%	 		\draw (4,4)--(4.3,4);
%	 		\draw (4,0)--(4.3,0);
%	 		
%	 		\draw (0,4)--(0.3,4);;
%	 		\draw (0,0)--(0.3,0);
%	 		\end{tikzpicture}
%	 	\end{center}
%	 	\caption{Twists with opposite signs annihilate each other.}
%	 	\label{fig:kinks}
%	 \end{figure}

%In fact, the reason why for framed links Reidemeister move I is replaced by annihilation of kinks, is that a self-crossing introduces a full twist of the ribbon (cf. Figure~\ref{fig:twist}). We will use this fact to define twisting (iso)morphisms that are compatible with the braiding induced by Lie algebra structure. 

The main objective is to obtain a representation of the framed braid group, whose definition we briefly recall. 
Let $\mathbb B_n$ denote the braid group on $n$-strings, presented by $n-1$ generators $\sigma_i$ with relations $\sigma_i\sigma_j = \sigma_j\sigma_i$ whenever $|i-j| \geq 2$, and $\sigma_i\sigma_{i+1}\sigma_i = \sigma_{i+1}\sigma_i\sigma_{i+1}$ for all $i$.  The framed braid group on $n$ strings (\cite{KS}), which we denote by the symbol $\mathbb {FB}_n$, is the semi-direct product $\mathbb Z^n\rtimes \mathbb B_n$. Explicitly, this group is presented by $2n-1$ generators $\sigma_1, \ldots , \sigma_{n-1}$, and $t_1, \ldots t_n$, where the generators $\sigma_i$ satisfy the same relations of the braid group, $t_it_j = t_jt_i$ for all $i,j$, and $\sigma_j t_i = t_{\tau_j(i)}\sigma_j $, where $\tau_j$ is the transposition $(j\ j+1)$. 

There is a framed version of Markov's theorem, asserting that every framed link can be presented as the closure of a framed braid. This is considered for instance in Lemma~1 of \cite{KS}. 

\subsection{Yang-Baxter operators}

Let $X$ be a vector space and let $R: X\otimes X\longrightarrow X\otimes X$ be a linear map on the tensor product. Then, we say that $R$ is a pre-Yang-Baxter operator if it satisfies the following equation
$$
(R\otimes \mathbbm 1)\circ (\mathbbm 1\otimes R)\circ (R\otimes \mathbbm 1) =(\mathbbm 1\otimes R)\circ (R\otimes \mathbbm 1)\circ (\mathbbm 1\otimes R),
$$
called Yang-Baxter equation (YBE), or braid equation. If, moreover, $R$ is invertible, then we say that $R$ is a Yang-Baxter operator. The braid equation is an algebraic formulation of Reidemeister move III, and it has also appeared in relation to particle scattering in physics \cite{Jimbo}. 

	\section{TSD objects associated to Lie algebras}\label{sec:LieTSD}
	
	Composing (binary) self-distributive objects in a symmetric monoidal category results into ternary self-distributive (TSD) objects as follows. Let $(X,\Delta,q)$ be a self-distributive object in the symmetric monoidal category $\mathcal C$, where $q: X\otimes X \longrightarrow X$ satisfies the categorical distributivity condition \cite{ESZ}, Section~$8$. Let us define $T: X\otimes X\otimes X \longrightarrow X$ as the composition $T := q\circ (q\otimes \mathbbm 1)$. Then (Theorem~$8.6$ in \cite{ESZ}) $T$ endows $X$ with the structure of a TSD object with respect to the same comultiplication of $(X,q)$. We describe a special case of this construction when the object $(X,\Delta,q)$ has map $q$ induced by the bracket of a Lie algebra, and the resulting TSD object is obtained, consequently, by means of binary brackets and nested binary brackets. The original binary construction was given in \cite{CCES}, while the ternary version appeared in \cite{ESZ}.
	
	Let $L$ be a Lie algebra over the ground field $\mathbbm k$, and define $X = \mathbbm k\oplus L$. 
	$X$ becomes a comonoid object in the category of vector spaces if we define a comultiplication $\Delta$ as
	$$
	(a,x) \mapsto (a,x)\otimes (1,0) + (1,0) \otimes (0,x),
	$$
	and counit $\epsilon$ as 
	$$
	(a,x)\mapsto a.
	$$
	
	Now we define $q: X\otimes X \longrightarrow X$ as 
	$$
	(a,x)\otimes (b,y) \mapsto (ab,bx + [x,y]).
	$$
	Direct computation (\cite{CCES}) using the Jacobi identity shows that $q$ satisfies the self-distributivity condition. The associated TSD object $(X,\Delta, T)$ has morphism $T$ defined by the assignment 
	$$
	(a,x)\otimes (b,y)\otimes (c,z) \mapsto (abc, bcx + c[x,y] + b[x,z] + [[x,y],z]),
	$$
	where $T$ satisfies the TSD property because it is obtained by composition of a self-distributive morphism, see Example~8.8 in \cite{ESZ}.

	\begin{remark}
		{\rm 
		By direct computation one obtains that the comultiplication $\Delta_3 : = (\Delta\otimes \mathbbm 1)\circ\Delta = (\mathbbm 1\otimes \Delta)\circ\Delta$, where the second equality is simply coassociativity, is given on simple tensors as
		$$
		(a,x) \mapsto (a,x)\otimes (1,0)\otimes (1,0) + (1,0)\otimes (0,x)\otimes (1,0) + (1,0)\otimes (1,0)\otimes (0,x).
		$$
	}
	\end{remark}

%\begin{remark}
%	{\rm
%	Observe that $\Delta$ is not cocommutative and, as a consequence, $\Delta_3(a,x)$ is not generally invariant under arbitrary shuffles of its entries (if $a\neq 0$). This fact will require careful considerations in the applications that follow.
%}
%\end{remark}

	\subsection{Solutions to the braid equation associated to Lie algebras}\label{subsec:braid}
	
	We use the TSD object induced by a (binary) Lie algebra structure to define a solution $R$ to the braid equation (i.e. a pre-Yang-Baxter operator) on $X\otimes X$ as follows. Let $L$ and $X$ be defined as above, and denote by $T$ the resulting TSD morphism. Let us use Sweedler's notation for comultiplication in the form $\Delta(a,x) = (a,x)^{(1)}\otimes (a,x)^{(2)}$. On simple tensors we set $R((a,x)\otimes (b,y)\otimes (c,z)\otimes (d,w))$ to be 
	\begin{eqnarray*}
	\lefteqn{R((a,x)\otimes (b,y)\otimes (c,z)\otimes (d,w))} \\
	&=& (c,z)^{(1)}\otimes (d,w)^{(1)} \otimes T((a,x)\otimes (c,z)^{(2)}\otimes (d,w)^{(2)})\otimes\\
	&&\hspace{0.5cm} \otimes T((b,y)\otimes (c,z)^{(3)}\otimes (d,w)^{(3)}) \in X^{\otimes 2}\otimes X^{\otimes 2}.
	\end{eqnarray*}
    
    Let us set some notation. Let $(X,\Delta)$ be a comonoid in a symmetric monoidal category $\mathcal C$. Then, define $\Delta_n: X\longrightarrow X^{\otimes n}$, for all $n\in \N_{\geq 2}$, to be $\Delta_n := (\mathbbm 1^{\otimes n-1}\otimes \Delta)\circ \cdots \circ(\mathbbm 1\otimes \Delta)\circ\Delta$. Clearly, due to coassociativity of $\Delta$ one could also equivalently define different compositions where $\Delta$ does not appear on the rightmost entry. For simplicity we will always consider the form of $\Delta_n$ given above. Coassociativity, in Sweedler notation, takes the following form. For $x\in X$, one has $\Delta_3(x) = (\mathbbm 1\otimes \Delta)\Delta(x) = (\mathbbm 1\otimes \Delta)(x^{(1)}\otimes x^{(2)}) = x^{(1)}\otimes x^{(21)}\otimes x^{(22)} = x^{(1)}\otimes x^{(2)} \otimes x^{(3)}$, where the last equality simply states that the total number of copies of $x$ is what really counts in the iterated application of comultiplication, rather than the way comultiplication is applied. For higher $n$, similar considerations hold.

	\begin{theorem}\label{thm:YBTSD}
		The map $R: X^{\otimes 2}\otimes X^{\otimes 2} \longrightarrow X^{\otimes 2}\otimes X^{\otimes 2}$ satisfies the braid equation. In other words, $R$ is a pre-Yang-Baxter operator on $X^{\otimes 2}\otimes X^{\otimes 2}$.
	\end{theorem}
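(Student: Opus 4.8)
The plan is to verify the braid equation directly on a general simple tensor of $X^{\otimes 6}$, written as a triple of pairs $P_1\otimes P_2\otimes P_3$ with each $P_i\in X^{\otimes 2}$, and to show that the two triple composites agree by reducing the computation to the defining TSD diagram for $T$, together with coassociativity of $\Delta$ and Lemma~\ref{lem:cocomm}. The essential observation is that $R$ has the shape of a ``doubled'' self-distributive action: in $R(P\otimes Q)$ the right pair $Q$ is comultiplied (via $\Delta_3$ on each of its two entries), one copy is retained as the new leading pair, and the remaining copies act through $T$ on the two entries of the left pair $P$. Thus $R$ is precisely the vector-space doubling of the ternary self-distributive operation, and the braid relation should follow from self-distributivity in the same way that $(x*y)*z=(x*z)*(y*z)$ produces the braid relation in the classical rack case.

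Concretely, I would first record two auxiliary facts used repeatedly. First, the element $(1,0)$ acts as a unit for $T$ in its two acting slots, in the sense that $T((a,x)\otimes(1,0)\otimes(1,0))=(a,x)$, which is immediate from the explicit formula for $T$; this guarantees that the ``inactive'' Sweedler summands produced by $\Delta_3$ (those carrying $(1,0)$ in the acting slots) leave their target unchanged. Second, $T$ is a coalgebra morphism, so it commutes with comultiplication, which is what allows the Sweedler copies generated on the two sides of the equation to be reassembled consistently. With these in hand I would expand the left composite $(R\otimes\mathbbm 1)\circ(\mathbbm 1\otimes R)\circ(R\otimes\mathbbm 1)$ and the right composite $(\mathbbm 1\otimes R)\circ(R\otimes\mathbbm 1)\circ(\mathbbm 1\otimes R)$ on $P_1\otimes P_2\otimes P_3$ in Sweedler notation, tracking which copies of the entries of $P_2$ and $P_3$ act on which entries of $P_1$.

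The comparison then reduces essentially to a single instance of the commuting TSD diagram: after using coassociativity to match the iterated comultiplications appearing on the two sides, the nested applications $T(T(\,\cdot\,)\otimes\cdots)$ arising on the left must be rewritten, via self-distributivity, into the form appearing on the right, and it is exactly the shuffle $\sigma=(2,4)(3,5)(6,8)(3,7)$ of the TSD diagram that records the required redistribution of acting copies. Where the two sides differ only by a reordering of the comultiplied copies of a single entry, I invoke Lemma~\ref{lem:cocomm}: since $\Delta$ is not cocommutative, such reorderings are legitimate only when they fix the leading tensor factor, and one checks that every reordering forced by the computation is of this restricted type, because the fixed ``head'' is always the retained first copy (carrying the scalar), while the permuted copies are precisely the purely second-component tensors to which the lemma applies.

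The main obstacle is bookkeeping rather than conceptual: each entry of $P_2$ and $P_3$ is comultiplied several times across the three applications of $R$, and the non-cocommutativity of $\Delta$ forbids transposing the Sweedler copies freely. The crux is therefore to organize the expansion so that every permutation of copies needed to align the two composites is shown to fix the first entry, and hence is permitted by Lemma~\ref{lem:cocomm}, while the genuinely nontrivial rewriting is isolated into one application of the TSD property of $T$. Verifying that the shuffle appearing in the TSD diagram matches, on the nose, the reindexing produced by passing from the left composite to the right composite is the step that will require the most care.
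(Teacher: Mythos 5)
Your proposal follows essentially the same route as the paper's proof: expand both composites on simple tensors in Sweedler notation, use coassociativity and the fact that $T$ is a coalgebra morphism to normalize the iterated comultiplications, apply the TSD property once to rewrite the nested $T(T(\cdot)\otimes\cdots)$ terms, and absorb the residual reindexing of Sweedler copies via Lemma~\ref{lem:cocomm}, checking that the required permutation fixes the first (retained) copy. The only cosmetic difference is your explicit remark that $(1,0)$ acts as a unit in the acting slots of $T$, which the paper does not need at this point; otherwise the argument is the same.
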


\begin{proof}
	It is enough to show that the braid equation
	$$
	(R\otimes \mathbbm 1)\circ (\mathbbm 1\otimes R)\circ (R\otimes \mathbbm 1) = (\mathbbm 1\otimes R)\circ (R\otimes \mathbbm 1)\circ (\mathbbm 1\otimes R)
	$$
	holds on simple tensors. To simplify notation, we denote elements of $X$ by the letters $x,y,z$ etc, although elements of $X$ are pairs where one element is in the ground field $\mathbbm k$, and one element is in the Lie algebra $L$. Let us consider the LHS of the braid equation on a simple tensor $x\otimes y \otimes z\otimes w\otimes u\otimes v$. We have
	\begin{eqnarray*}
		\lefteqn{(R\otimes \mathbbm 1)\circ (\mathbbm 1\otimes R)\circ (R\otimes \mathbbm 1)(x\otimes y \otimes z\otimes w\otimes u\otimes v)}\\
		&=& u^{(11)}\otimes v^{(11)}\otimes T(z^{(1)}\otimes u^{(12)}\otimes v^{(12)})\otimes T(w^{(1)}\otimes u^{(13)}\otimes v^{(13)})\otimes\\
		&&\hspace{0.5cm} \otimes T(T(x\otimes z^{(2)}\otimes w^{(2)})\otimes u^{(2)}\otimes v^{(2)})\otimes \\
		&&\hspace{0.5cm} \otimes T(T(y\otimes z^{(3)}\otimes w^{(3)})\otimes u^{(3)}\otimes v^{(3)})\\
		&=& u^{(1)}\otimes v^{(1)}\otimes T(z^{(1)}\otimes u^{(2)}\otimes v^{(2)})\otimes T(w^{(1)}\otimes u^{(3)}\otimes v^{(3)})\otimes\\
		&&\hspace{0.5cm} \otimes T(T(x\otimes z^{(2)}\otimes w^{(2)})\otimes u^{(4)}\otimes v^{(4)})\otimes \\
		&&\hspace{0.5cm} \otimes T(T(y\otimes z^{(3)}\otimes w^{(3)})\otimes u^{(5)}\otimes v^{(5)})
		\end{eqnarray*}
	where the first equality uses the definition of $R$ and the fact that $T$ is a coalgebra morphism, while the second equality is a consequence of the  coassociativity of $\Delta$. Similarly, we obtain for the RHS 
	
	\begin{eqnarray*}
		\lefteqn{(\mathbbm 1\otimes R)\circ (R\otimes \mathbbm 1)\circ (\mathbbm 1\otimes R)(x\otimes y \otimes z\otimes w\otimes u\otimes v)}\\
		&=&(\mathbbm 1\otimes R)\circ (R\otimes \mathbbm 1)[x\otimes y\otimes u^{(1)}\otimes v^{(1)}\otimes T(z\otimes u^{(2)}\otimes v^{(2)})\otimes\\
		&& \hspace{1.5cm} T(w\otimes u^{(3)}\otimes v^{(3)})]\\
		&=& (\mathbbm 1\otimes R)[u^{(11)}\otimes v^{(11)}\otimes T(x\otimes u^{(12)}\otimes v^{(12)})\otimes T(y\otimes u^{(13)}\otimes v^{(13)})\\
		&& \otimes T(z\otimes u^{(2)}\otimes v^{(2)})\otimes T(w\otimes u^{(3)}\otimes v^{(3)})]\\
		&=& u^{(11)}\otimes v^{(11)}\otimes T(z^{(1)}\otimes u^{(21)}\otimes v^{(21)})\otimes T(w^{(1)}\otimes u^{(31)}\otimes v^{(31)})\otimes \\
		&& \otimes T(T(x\otimes u^{(12)}\otimes v^{(12)})\otimes T(z^{(2)}\otimes u^{(22)}\otimes v^{(22)})\otimes\\
		&&\hspace{1.5cm} T(w^{2}\otimes u^{(32)}\otimes v^{(32)}))\\
		&&\otimes T(T(y\otimes u^{(13)}\otimes v^{(13)})\otimes T(z^{(3)}\otimes u^{(23)}\otimes v^{(23)})\otimes\\
		&&\hspace{1.5cm} T(w^{3}\otimes u^{(33)}\otimes v^{(33)}))\\
		&=& u^{(1)}\otimes v^{(1)}\otimes T(z^{(1)}\otimes u^{(4)}\otimes v^{(4)})\otimes T(w^{(1)}\otimes u^{(7)}\otimes v^{(7)})\otimes \\
		&& \otimes T(T(x\otimes u^{(2)}\otimes v^{(2)})\otimes T(z^{(2)}\otimes u^{(5)}\otimes v^{(5)})\otimes\\
		&&\hspace{1.5cm} T(w^{2}\otimes u^{(8)}\otimes v^{(8)}))\\
		&&\otimes T(T(y\otimes u^{(3)}\otimes v^{(3)})\otimes T(z^{(3)}\otimes u^{(6)}\otimes v^{(6)})\otimes\\
		&&\hspace{1.5cm} T(w^{3}\otimes u^{(9)}\otimes v^{(9)})),
		\end{eqnarray*}
	where in the last equality we have applied the coassociativity of $\Delta$ to suppress double indices in Sweedler notation.
	Applying the TSD property to the RHS we see that the two terms differ by a reshuffling of the terms corresponding to the permutation
	$$
	\shuffle = \bigl(\begin{smallmatrix}
	1 & 2 & 3 & 4 & 5 &6 & 7 & 8 & 9 \\
	1 & 4 & 7 & 2 & 5 & 8 & 3 & 6 & 9
	\end{smallmatrix}\bigr).
	$$
	By direct inspection we see that $\Delta_3(a,x) = (\mathbbm 1\otimes \tau)\circ\Delta_3$, so that applying  the fact that $T$ is a morphism of comonoids, and therefore it commutes with the comultiplication, we see that the two terms given above coincide, which concludes the proof. 
\end{proof}

\subsection{Reversibility condition}

The TSD condition satisfied by $T$ allows to define a linear map $R$ on the doubled space $X^{\otimes 2}$ that satisfies the braid equation. We will discuss now an algebraic condition that is sufficient for $R$ to be invertible. 

Recall that if $(X,\mu, \eta,\Delta, \epsilon, S)$ is an involutory Hopf algebra, i.e. $S^2 = \mathbbm 1$, then there is a natural TSD structure on $X$ that generalizes the heap operation $(x,y,z) \mapsto xy^{-1}z$ in a group. Specifically, this is defined by the equation on simple tensors \cite{ESZ,ESZheap}
$$
x\otimes y\otimes z \mapsto xS(y)z.
$$

Moreover, the quantum heap structure assocaited to an involutory cocommutative Hopf algebra satisfies a condition, which we will call {\it reversibility condition}, that generalizes the set-theoretic equality $T(T(x,y,z),z,y) = x$ for the heap of a group. With the TSD structure defined above, reversibility takes the form

\begin{eqnarray*}
T(T(x\otimes y^{(2)}\otimes z^{(2)})\otimes z^{(1)}\otimes y^{(1)}) &=& T(xS(y^{(2)})z^{(2)}\otimes z^{(1)}\otimes y^{(1)})\\
&=&  xS(y^{(2)})z^{(2)}S(z^{(1)})y^{(1)}\\
&=& \epsilon(z) xS(y^{(2)})y^{(1)}\\
&=& \epsilon(y)\epsilon(z) x,
\end{eqnarray*}
where in the first and second equalities we have used the definition of $T$, while in the third and fourth equalities we have used the defining axiom relating comultiplication, multiplication and antipode in a (cocommutative) Hopf algebra. 

\begin{remark}
	{\rm 
	Observe that if we were to extend the group heap structure to the associated group algebra, then the two forms, set-theoretic and quantum heap, of reversibility would coincide, as a direct inspection shows.
}
\end{remark}

The reversibility condition plays a fundamental role, as it algebraically represents the Reidemeister move II. It is easy to see that, in general, the TSD structure associated to a Lie algebra as described at the beginning of Section~\ref{sec:LieTSD} does not satisfy the reversibility condition given above for quantum heaps. We pose the following definition.
\begin{definition}
	{\rm 
Let $(X,T)$ be a TSD object in the category of vector spaces. We say that $T$ is {\it reversible} if there exists a morphism $\tilde T: X\otimes X\otimes X\longrightarrow X$ such that $(X,\Delta,\tilde T)$ is a TSD object, and the pair $(T,\tilde T)$ satisfies the equation (reversibility condition)
$$
\tilde T(T(x\otimes y^{(2)}\otimes z^{(2)})\otimes z^{(1)}\otimes y^{(1)}) = \epsilon(y)\epsilon(z)\cdot x.
$$
A similar equality is also required when exchanging the roles of $\tilde T$ and $T$.  Moreover, we say that $T$ is reversible with respect to $\tilde T$. In this situation we say that $X$ is a {\it ternary rack} in the category of vector spaces. 
}
\end{definition}
We now show that, when $(X,T)$ arises from a Lie algebra $L$ as described above, $(X,T)$ is a ternary rack. Observe that the reversibility condition, with notation of pairs $(a,x)\in \mathbbm k\oplus L =: X$ is written 
$$
\tilde T(T((a,x)\otimes (b,y)^{(2)}\otimes (c,z)^{(2)})\otimes (c,z)^{(1)}\otimes (b,y)^{(1)}) = \epsilon(b,y)\epsilon(c,z)(a,x).
$$
 Let us define $\tilde T: X^{\otimes 3}\longrightarrow X$ by the assignment 
$$
\tilde T((a,x)\otimes (b,y)\otimes (c,z)) = (abc, bcx - c[x,y] - b[x,z]+[[x,y]z]).
$$
\begin{lemma}
	The linear map $\tilde T$ turns $(X,\Delta)$ into a TSD object. 
\end{lemma}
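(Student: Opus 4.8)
The plan is to recognise $\tilde T$ as the TSD morphism produced by the construction at the start of this section, applied not to $L$ but to the Lie algebra obtained from $L$ by negating its bracket. Write $L'$ for the vector space $L$ equipped with $[\bullet,\bullet]' := -[\bullet,\bullet]$. A one-line check shows $L'$ is again a Lie algebra: the new bracket is bilinear and skew-symmetric, and for the Jacobi identity one notes $[[u,v]',w]' = -[-[u,v],w] = [[u,v],w]$, so the Jacobiator of $L'$ coincides with that of $L$ and hence vanishes.

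First I would form the binary map $\tilde q : X\otimes X\longrightarrow X$ attached to $L'$, namely $\tilde q\big((a,x)\otimes(b,y)\big) = (ab,\, bx + [x,y]') = (ab,\, bx - [x,y])$. Since the recipe $L\mapsto (X,\Delta,q)$ produces a self-distributive object for \emph{every} Lie algebra — this is exactly the computation recalled at the beginning of Section~\ref{sec:LieTSD}, and it invokes only the defining identities of a Lie algebra, each of which holds verbatim in $L'$ — the map $\tilde q$ satisfies the categorical self-distributivity condition of \cite{ESZ}, Section~$8$. Note that the comultiplication $\Delta$ and counit $\epsilon$ depend only on the splitting $X = \mathbbm k\oplus L$ and not on the bracket, so the comonoid $(X,\Delta,\epsilon)$ attached to $L'$ is literally the same as the one attached to $L$.

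Next I would verify that the composite $\tilde q\circ(\tilde q\otimes\mathbbm 1)$ reproduces $\tilde T$. Applying $\tilde q\otimes\mathbbm 1$ to $(a,x)\otimes(b,y)\otimes(c,z)$ gives $(ab,\, bx-[x,y])\otimes(c,z)$, and a second application of $\tilde q$ yields
\[
\big(abc,\; c(bx-[x,y]) - [\,bx-[x,y],\,z\,]\big) = \big(abc,\; bcx - c[x,y] - b[x,z] + [[x,y],z]\big),
\]
which agrees with the defining formula for $\tilde T$. Invoking Theorem~$8.6$ of \cite{ESZ}, to the effect that the composition $\tilde q\circ(\tilde q\otimes\mathbbm 1)$ of a self-distributive morphism is automatically TSD with respect to the same comultiplication, then shows that $\tilde T$ endows $(X,\Delta)$ with the structure of a TSD object.

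The only point requiring care is the sign bookkeeping in the iterated bracket: negating the bracket flips the two ``mixed'' terms $c[x,y]$ and $b[x,z]$ but leaves the doubly-nested term $[[x,y],z]$ unchanged (two cancelling signs, i.e.\ $[[x,y]',z]' = [[x,y],z]$), which is precisely the discrepancy between $T$ and $\tilde T$. A reader preferring a self-contained argument could instead check the commuting TSD diagram directly on simple tensors, expanding both legs and collapsing them via the Jacobi identity; I regard the identification of $\tilde T$ with the TSD morphism of $L'$ as the cleaner route, since it makes the self-distributivity of $\tilde q$ a consequence of the already-established general construction rather than a fresh computation.
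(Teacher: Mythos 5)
Your proof is correct, and it shares the paper's overall skeleton: both arguments write $\tilde T$ as the composite $\tilde q\circ(\tilde q\otimes\mathbbm 1)$ of the binary map $\tilde q\big((a,x)\otimes(b,y)\big)=(ab,\,bx-[x,y])$ and then invoke Theorem~8.6 of \cite{ESZ} to upgrade binary self-distributivity to the TSD property. Where you genuinely diverge is in how the properties of $\tilde q$ are established. The paper carries out two direct computations: it expands both sides of the binary self-distributivity condition on simple tensors and matches them via the Jacobi identity, and it separately checks that $\tilde q$ is a coalgebra morphism. You replace both computations with the single structural observation that $\tilde q$ is precisely the map $q$ that the construction at the start of Section~\ref{sec:LieTSD} attaches to the Lie algebra $L'=(L,-[\bullet,\bullet])$ (the opposite Lie algebra of $L$, hence indeed a Lie algebra, as your sign check $[[u,v]',w]'=[[u,v],w]$ confirms), while noting that $\Delta$ and $\epsilon$ depend only on the splitting $X=\mathbbm k\oplus L$ and not on the bracket. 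Your verification that the composite reproduces the stated formula for $\tilde T$, including the point that the two mixed terms flip sign while the doubly nested term does not, is accurate. The trade-off is the usual one: your route is shorter and explains \emph{why} $\tilde T$ has its particular sign pattern, but it leans on the (unproved in the paper, though asserted) fact that the $L\mapsto(X,\Delta,q)$ recipe yields a self-distributive coalgebra morphism for \emph{every} Lie algebra; the paper's computations are longer but self-contained and double as a template for the later reversibility calculations.
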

\begin{proof}
	We need to show that $\tilde T$ is TSD. To do so, we follow the same procedure that shows that $T$ is TSD, by showing that $\tilde T$ can be written as a composition of binary self-distributive morphisms. On simple tensors we define the morphisms $\tilde q: X\otimes X\longrightarrow X$ by the assignment $(a,x)\otimes (b,y) \mapsto (ab, bx - [x,y])$. Let us now verify that $\tilde q$ satisfies (binary) self-distributivity. For the LHS of self-distributive condition we have on tensors $(a,x)\otimes (b,y)\otimes (c,z)\in X^{\otimes 3}$
	$$
	\tilde q(\tilde q \otimes \mathbbm 1)(a,x)\otimes (b,y)\otimes (c,z)  =  (abc,bcx - c[x,y] - b[x,z] + [[x,y],z]).
   $$
   For RHS of self-distributivity, we need to consider first the comultiplication $\Delta$, and transpose the middle terms. The computation explicitly gives
   \begin{eqnarray*}
   	\lefteqn{\tilde q\circ(\tilde q\otimes \tilde q)\circ(\mathbbm 1\otimes \tau\otimes \mathbbm 1)\circ(\mathbbm 1^{\otimes 2}\otimes \Delta)(a,x)\otimes (b,y)\otimes (c,z)}\\
   	&=& \tilde q(\tilde q(a,x)\otimes (c,z) \otimes \tilde q(b,y)\otimes (1,0) + \tilde q(a,x)\otimes (1,0) + \tilde q(b,y)\otimes (0,z)\\
   	&=&  (abc,bcx - b[x,z] - c[x,y] + [[x,z],y] + [x,[y,z]]).
   	\end{eqnarray*}
   Using the Jacobi identity for $L$ we see that the two terms coincide, showing that $\tilde q$ satisfies the (binary) self-distributive condition. Observe now that $\tilde T$ is obtained from $\tilde q$ by nesting a copy of $\tilde q$ into another copy of $\tilde q$: $\tilde T = \tilde q\circ (\tilde q\otimes \mathbbm 1)$. Applying Theorem~8.6 in \cite{ESZ} it follows that $\tilde T$ is TSD. To complete the proof we need to show that $\tilde T$ is a morphism of coalgebras, where $X^{\otimes 3}$ is endowed with the coalgebra structure induced by tensor product of coalgebras. This would follow if we show that $\tilde q$ is a morphism of coalgebras. We have
   \begin{eqnarray*}
   \Delta \tilde q (a,x)\otimes (b,y) &=& \Delta (ab,bx - [x,y])\\
   &=& (ab,bx - [x,y])\otimes (1,0) + (1,0)\otimes (0,bx - [x,y])
   \end{eqnarray*}
and also 
\begin{eqnarray*}
	\lefteqn{(\tilde q\otimes \tilde q) \circ (\mathbbm 1\otimes \tau \otimes \mathbbm 1)\circ (\Delta\otimes \Delta)(a,x)\otimes (b,y)}\\
	&=& (\tilde q\otimes \tilde q)((a,x)\otimes (b,y)\otimes (1,0)\otimes (1,0) + (a,x)\otimes  (1,0)\otimes (1,0)\otimes (0,y)\\
	&& + (1,0)\otimes (b,y)\otimes (0,x)\otimes (1,0) + (1,0)\otimes (1,0)\otimes (0,x)\otimes (0,y)\\
	&=& (ab,bx-[x,y])\otimes (1,0) + (a,x)\otimes (0,0)\\
	&& + (b,0)\otimes (0,x) + (1,0)\otimes (0,-[x,y])\\
	&=& (ab,bx - [x,y])\otimes (1,0) + (1,0)\otimes (0,bx - [x,y]).
	\end{eqnarray*}
    A direct computation shows also that $\tilde q$ respects the counit $\epsilon$. 
\end{proof}
\begin{remark}
	{\rm 
A direct computation to show that $\tilde T$ satisfies the TSD property can be done, with a similar approach as in Appendix~A in \cite{ESZ}, where the only difference will be in the signs appearing in the monomials, due to the fact that $\tilde q$ is defined with a ``twisted'' term  with respect to the analogous binary operation that gives rise to $T$. 
}
\end{remark}
\begin{proposition}\label{pro:reverse}
	The ternary operation $T$satisfies the reversibility condition with respect to $\tilde T$.
\end{proposition}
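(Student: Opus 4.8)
The plan is to verify the reversibility condition by direct computation on simple tensors, expanding the Sweedler notation explicitly. Since $\Delta(b,y) = (b,y)\otimes(1,0) + (1,0)\otimes(0,y)$ and $\Delta(c,z) = (c,z)\otimes(1,0) + (1,0)\otimes(0,z)$, the left-hand side
$$
\tilde T\bigl(T((a,x)\otimes (b,y)^{(2)}\otimes (c,z)^{(2)})\otimes (c,z)^{(1)}\otimes (b,y)^{(1)}\bigr)
$$
decomposes as a sum of four terms, indexed by the choice of summand from each comultiplication. In two of these summands the factor coming from $(b,y)$ splits as $(b,y)^{(1)}=(b,y)$, $(b,y)^{(2)}=(1,0)$, and in the other two as $(b,y)^{(1)}=(1,0)$, $(b,y)^{(2)}=(0,y)$; likewise for $(c,z)$.

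First I would evaluate the inner application of $T$ in each of the four cases. The point is that whenever an argument of $T$ is of the form $(1,0)$ or $(0,\bullet)$, most monomials in the formula for $T$ collapse: brackets against the zero Lie-element vanish, and the scalar prefactors are $0$ or $1$. Consequently the four inner values come out, respectively, as $(a,x)$, $(0,[x,z])$, $(0,[x,y])$ and $(0,[[x,y],z])$. Next I would apply $\tilde T$ to each of these, observing that the surviving outer arguments $(c,z)^{(1)}$ and $(b,y)^{(1)}$ are either the original pairs or $(1,0)$, so the same collapsing phenomenon applies. The resulting Lie-algebra components of the four contributions are
$$
bcx - b[x,z] - c[x,y] + [[x,z],y],\quad b[x,z] - [[x,z],y],\quad c[x,y] - [[x,y],z],\quad [[x,y],z],
$$
and only the first contribution carries a nonzero scalar component, namely $abc$.

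Summing the four contributions is then the crux. The scalar part is simply $abc$. In the Lie-algebra part, the single-bracket terms $\mp b[x,z]$ cancel, the terms $\mp c[x,y]$ cancel, and -- this is the key structural observation -- the nested-bracket terms cancel pairwise as well: $+[[x,z],y]$ against $-[[x,z],y]$, and $-[[x,y],z]$ against $+[[x,y],z]$. What survives is exactly $bcx$, so the left-hand side equals $(abc, bcx) = bc\,(a,x) = \epsilon(b,y)\epsilon(c,z)(a,x)$, as required.

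I expect the main difficulty to be purely bookkeeping: tracking which Sweedler summand supplies which factor, and the attendant signs. It is worth emphasising that, in contrast with the verification that $T$ and $\tilde T$ are TSD, no appeal to the Jacobi identity is needed here; the cancellation of all bracket terms is forced by the sign reversal built into the definition of $\tilde T$ together with the transposition of the last two arguments in the reversibility condition. Finally, I would note that the symmetric statement, with the roles of $T$ and $\tilde T$ interchanged, follows from the identical computation with all bracket signs flipped.
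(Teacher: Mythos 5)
Your proposal is correct and follows essentially the same route as the paper: both expand the Sweedler coproducts of $(b,y)$ and $(c,z)$ into the same four summands, evaluate the inner $T$ to $(a,x)$, $(0,[x,z])$, $(0,[x,y])$, $(0,[[x,y],z])$, and observe that the bracket terms cancel pairwise after applying $\tilde T$, with no use of the Jacobi identity. Your closing remarks on the sign-reversal mechanism and the symmetric statement with $T$ and $\tilde T$ exchanged are accurate and consistent with the paper's brief concluding sentence.
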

\begin{proof}
	We verify the reversibility condition on simple tensors directly. We compute 
	\begin{eqnarray*}
   \lefteqn{\tilde T(T((a,x)\otimes (b,y)^{(2)} \otimes (c,z)^{(2)})\otimes (c,z)^{(1)}\otimes (b,y)^{(1)})} \\
   &=& \tilde T(T((a,x)\otimes (1,0)\otimes (1,0))\otimes (c,z)\otimes (b,y)) \\
   && + \tilde T(T((a,x)\otimes (1,0)\otimes (0,z))\otimes (1,0)\otimes (b,y))\\
   && + \tilde T(T((a,x)\otimes (0,y)\otimes (1,0))\otimes (c,z)\otimes (1,0))\\
   && + \tilde T(T((a,x)\otimes (0,y)\otimes (0,z))\otimes (1,0)\otimes (1,0))\\
   &=& (abc,bcx - c[x,y] - b[x,z]+ [[x,z],y])\\
   && \tilde T((0,[x,z])\otimes (1,0)\otimes (b,y))\\
   && + \tilde T((0,[x,y])\otimes (c,z)\otimes (1,0))\\
   && + \tilde T((0,[[x,y],z])\otimes (1,0)\otimes (1,0))\\
   &=& (abc,bcx - c[x,y] - b[x,z]+ [[x,z],y])+ (0,b[x,z]-[[x,z],y]) \\
   && + (0, c[x,y]-[[x,y],z]) + (0,[[x,y],z])\\
   &=&  (abc, bc x)\\
   &=& bc (a,x)\\
   &=& \epsilon(b,y)\epsilon(c,z) (a,x),
	\end{eqnarray*}
which proves the reversibility condition. A similar computation holds for the case in which the roles of $\tilde T$ and $T$ are exchanged. 
\end{proof}
\subsection{Invertibility of the pre-Yang-Baxter operator $R$}

Applying the reversibility condition and Proposition~\ref{pro:reverse} we construct an inverse to $R$, showing that this is indeed a Yang-Baxter operator. Before the statement and proof of this assertion, we make an observation.
\begin{remark}
	{\rm 
	The proof of Theorem~\ref{thm:Rinv} below holds more generally when $X$ is a cocommutative Lie object in a monoidal category, and $R$ is constructed from a TSD object that is reversible. Proofs are generalized to this case as done in \cite{EZ} for quantum heap objects, Section~7, and the equations in the proof below are replaced by commutative diagrams. We will not pursue this theory in its full generality here. 
}
\end{remark}

\begin{theorem}\label{thm:Rinv}
	The operator $R$ defined in Subsection~\ref{subsec:braid} is invertible. Therefore it is a Yang-Baxter operator. 
\end{theorem}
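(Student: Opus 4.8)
The plan is to exhibit an explicit two-sided inverse for $R$, built from the reversed operation $\tilde T$ exactly as the inverse braiding of a rack is built from the inverse of right multiplication. Guided by the shape of $R$ — which sends the second pair of tensorands to the front and lets it act, via $T$, on the first pair — I would define $R^{-1}$ on simple tensors by
\[
R^{-1}(p\otimes q\otimes r\otimes s) = \tilde T(r\otimes q^{(1)}\otimes p^{(1)})\otimes \tilde T(s\otimes q^{(2)}\otimes p^{(2)})\otimes p^{(3)}\otimes q^{(3)},
\]
where $p^{(1)}\otimes p^{(2)}\otimes p^{(3)} = \Delta_3(p)$ and similarly for $q$. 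The idea is that $r$ and $s$ should be read as the first pair after having been acted upon, so that $\tilde T$ un-does this action, while the two surviving tail copies $p^{(3)}, q^{(3)}$ reconstitute the recovered second pair. The ordering of $q$ before $p$ inside each $\tilde T$ is dictated by the reversal of arguments in the reversibility condition of Proposition~\ref{pro:reverse}.

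To check $R^{-1}\circ R = \mathbbm 1$ I would evaluate on $x\otimes y\otimes z\otimes w$, substitute $p = z^{(1)}$, $q = w^{(1)}$, $r = T(x\otimes z^{(2)}\otimes w^{(2)})$, $s = T(y\otimes z^{(3)}\otimes w^{(3)})$, and then flatten all nested comultiplications into a single $\Delta_5(z)$ and $\Delta_5(w)$ using coassociativity together with the fact — already exploited in the proof of Theorem~\ref{thm:YBTSD} — that $T$ is a morphism of comonoids. At this point each of the two factors $\tilde T(T(\cdots)\otimes\cdots)$ matches the left-hand side of the reversibility condition up to the placement of the comultiplication legs. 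I would invoke Lemma~\ref{lem:cocomm} to permute the tail legs of $\Delta_5(z)$ and $\Delta_5(w)$ into the exact pattern required by Proposition~\ref{pro:reverse}, apply reversibility twice to collapse the two factors to $\epsilon$-weighted copies of $x$ and $y$, and finally use the counit axioms to absorb the $\epsilon$ factors and contract the remaining legs, leaving $x\otimes y\otimes z\otimes w$.

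The identity $R\circ R^{-1} = \mathbbm 1$ follows by the mirror computation: after substituting $R^{-1}$ one meets the composite $T(\tilde T(\cdots)\otimes\cdots)$, which is governed by the partner reversibility condition with the roles of $T$ and $\tilde T$ exchanged — guaranteed by the definition of reversibility and noted at the end of the proof of Proposition~\ref{pro:reverse}. Since both composites reduce to the identity, $R$ is invertible, and being already a solution of the braid equation by Theorem~\ref{thm:YBTSD}, it is a genuine Yang-Baxter operator.

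The main obstacle is precisely the non-cocommutativity of $\Delta$: the legs produced by iterated comultiplication are \emph{not} freely interchangeable, so one cannot simply declare the legs fed to $\tilde T$ and those fed to $T$ to be the two halves of a single $\Delta_2$. The content of the argument is to verify that, after coassociative flattening, the distinguished first leg of each $\Delta_n$ — the one carrying the nonzero scalar — is routed to the output pair, while only \emph{tail} legs are shuffled by Lemma~\ref{lem:cocomm} to feed the two applications of the reversibility condition. Tracking this routing carefully, so that the counit reconstitutes $z$ and $w$ rather than annihilating them, is where essentially all the work lies; the Lie-theoretic input has already been absorbed into Proposition~\ref{pro:reverse}.
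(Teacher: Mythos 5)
Your proposal is correct and follows essentially the same route as the paper: the inverse $\tilde R$ defined there has exactly the shape you propose (two copies of $\tilde T$ applied to the last two tensor factors, each fed by comultiplication legs of the first two, together with two surviving legs), and the verification likewise flattens the nested coproducts via coassociativity and the comonoid-morphism property of $T$, reshuffles the tail legs using Lemma~\ref{lem:cocomm}, applies the reversibility condition of Proposition~\ref{pro:reverse} twice, and finishes with the counit axiom. The only (immaterial, by uniqueness of two-sided inverses) difference is the leg-routing convention: the paper sends the distinguished first legs $(a,x)^{(1)},(b,y)^{(1)}$ to the surviving output positions and feeds the $\tilde T$'s with tail legs, whereas your displayed formula does the opposite, so be aware that your closing paragraph describes the paper's routing rather than the one your own formula actually implements.
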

\begin{proof}
	We define the map $\tilde R$ as
	\begin{eqnarray*}
		\lefteqn{(a,x)\otimes (b,y)\otimes (c,z) \otimes (d,w)}\\
		&\mapsto& \tilde T((c,z)\otimes (b,y)^{(2)}\otimes (a,x)^{(2)})\otimes \tilde T((d,w)\otimes (b,y)^{(3)}\otimes (a,x)^{(3)})\otimes\\
		&&\hspace{0.5cm} \otimes  (a,x)^{(1)}\otimes (b,y)^{(1)},
	\end{eqnarray*}
and show that it is the required inverse to $R$. This result, in fact, depends only on the fact that $T$ satisfies the reversibility condition with respect to $\tilde T$, as shown in Proposition~\ref{pro:reverse}, rather than on the actual definitions of $T$ and $\tilde T$. We have
\begin{eqnarray*}
\lefteqn{\tilde R R(a,x)\otimes (b,y)\otimes (c,z)\otimes (d,w)}\\
&=& \tilde R(c,z)^{(1)}\otimes (d,w)^{(1)}\otimes\\
&& T((a,x)\otimes (c,z)^{(2)}\otimes (d,w)^{(2)}) \otimes T((b,y)\otimes (c,z)^{(3)}\otimes (d,w)^{(3)}) \\
&=& \tilde T(T((a,x)\otimes (c,z)^{(2)}\otimes (d,w)^{(2)})\otimes (d,w)^{(12)}\otimes (c,z)^{(12)}\otimes \\
&&  \tilde T(T((b,y)\otimes (c,z)^{(3)}\otimes (d,w)^{(3)})\otimes (d,w)^{(13)}\otimes (c,z)^{(13)}\\
&& \otimes (c,z)^{(11)}\otimes (d,w)^{(11)}.
\end{eqnarray*}
Using naturality of the switching morphism and the fact that $T$ and $\tilde T$ are morphims of coalgebras, we can rewrite the last term as 
$$
(\tilde T\circ (T\otimes \mathbbm 1^{\otimes 2}))^{\otimes 2}\circ \shuffle \circ (\mathbbm 1^{\otimes 2}\otimes \Delta_4\otimes \Delta_4)(a,x)\otimes (b,y)\otimes (c,z)\otimes (d,w),
$$
where the permutation $\shuffle$ is given by 
$$
\shuffle = \bigl(\begin{smallmatrix}
1 & 2 & 3 & 4 & 5 &6 & 7 & 8 & 9 & 10 & 11 & 12\\
1 & 6 & 11 & 5 & 10 & 2 & 7 & 12 & 4 & 9 & 3 & 8
\end{smallmatrix}\bigr).
$$
Since $\Delta_4$ is symmetric with respect to permutations we can compose $\Delta_4\otimes \Delta_4$ with the permutation 
$$
\shuffle' = \bigl(\begin{smallmatrix}
1 & 2 & 3 & 4 & 5\\
1 & 2 & 4 & 3 & 5
\end{smallmatrix}\bigr) \otimes 
\bigl(\begin{smallmatrix}
1 & 2 & 3 & 4 & 5\\
1 & 2 & 4 & 3 & 5
\end{smallmatrix}\bigr)
$$
leaving the result invariant. Computing 
$$
(\tilde T\circ (T\otimes \mathbbm 1^{\otimes 2}))^{\otimes 2}\circ \shuffle \circ (\mathbbm 1^{\otimes 2}\otimes \Delta'_4\otimes \Delta'_4)(a,x)\otimes (b,y)\otimes (c,z)\otimes (d,w),
$$
where we have set $\Delta'_4 = \shuffle'\circ \Delta$, we obtain
\begin{eqnarray*}
\lefteqn{\tilde R R(a,x)\otimes (b,y)\otimes (c,z)\otimes (d,w)}\\
&=& \tilde T(T((a,x)\otimes (c,z)^{(3)}\otimes (d,w)^{(3)})\otimes (d,w)^{(2)}\otimes (c,z)^{(2)}) \\
&& \otimes \tilde T(T((b,y)\otimes (c,z)^{(5)}\otimes (d,w)^{(5)})\otimes (d,w)^{(4)}\otimes (c,z)^{(4)}\\
&& \otimes (c,z)^{(1)}\otimes (d,w)^{(1)}\\
&=& \epsilon(c,z)^{(2)}\epsilon(d,w)^{(2)} (a,x)\otimes (b,y) \otimes (c,z)^{(1)}\otimes (d,w)^{(2)}\\
&=& (a,x)\otimes (b,y) \otimes (c,z)\otimes (d,w),
\end{eqnarray*}
which shows that $\tilde R$ is a left inverse to $R$. Similar considerations apply to show that $\tilde R$ is a right inverse as well. 
\end{proof}

\section{Generalization to $3$-Lie algebras}\label{sec:ternaryLie}

\subsection{3-Lie algebras}
In this subsection we consider a generalization of the TSD structure defined by composing Lie algebras, to the case of $3$-Lie algebras and, in particular, to that of ternary Nambu-Lie algebras. See for instance the case $n=3$ in \cite{Filippov}. In fact, from a (binary) Lie algebra bracket $[\bullet,\bullet]$, the trilinear bracket defined by the rule $[\bullet,\bullet,\bullet] := [[\bullet,\bullet],\bullet]$ statisfies the defining Filippov Identity of a $3$-Lie algebra introduced in \cite{Filippov}. It is natural to ask whether we can construct TSD structures directly from a general $3$-Lie algebra, even when this is not obtained as the composition of binary structures. 

Let $L$ be a $3$-Lie algebra over a field $\mathbbm k$, where the trilinear bracket is denoted by $[\bullet, \bullet, \bullet]$. Define the vector space $X := \mathbbm k\oplus L$, where similarly as before we indicate its elements by pairs of a scalar and a vector in $L$. We endow $L$ with the comultiplication introduced in Section~\ref{sec:LieTSD}, and define the ternary operation $T: X\otimes X\otimes X\longrightarrow X$ on simple tensors according to the assignment
$$
(a,x)\otimes (b,y)\otimes (c,z) \mapsto (abc, bcx + [x,y,z]).
$$

\begin{lemma}\label{lem:3Lie}
	The coalgebra $(X,\Delta)$ endowed with the ternary operation $T$ defined above is a TSD object. Moreover, $T$ is reversible.
\end{lemma}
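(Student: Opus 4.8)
The plan is to split the lemma into two claims: that $(X,\Delta,T)$ is a TSD object, and that $T$ is invertible (reversible) in the sense of the ternary rack axioms. For the first claim I would proceed as in the binary case treated above, but note at the outset that here $T$ is \emph{not} a composite of binary self-distributive maps, so Theorem~8.6 of \cite{ESZ} is unavailable and every verification must be carried out directly on simple tensors.

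First I would check that $T$ is a morphism of coalgebras. Writing $\Delta(a,x)=(a,x)\otimes(1,0)+(1,0)\otimes(0,x)$, one expands $(T\otimes T)\circ\Delta_{X^{\otimes 3}}$ on a simple tensor $(a,x)\otimes(b,y)\otimes(c,z)$; of the eight summands only three survive, because the bracket $[\bullet,\bullet,\bullet]$ annihilates any argument equal to $(1,0)$ or $(0,0)$. Using the identification $(bc,0)=bc\cdot(1,0)$ in $X=\mathbbm k\oplus L$, the surviving terms collapse to $(abc,bcx+[x,y,z])\otimes(1,0)+(1,0)\otimes(0,bcx+[x,y,z])$, which is exactly $\Delta\circ T$. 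Compatibility with the counit is immediate since $\epsilon\circ T=abc=\epsilon^{\otimes 3}$.

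Next I would verify the TSD diagram on a simple tensor $(a_1,x_1)\otimes\cdots\otimes(a_5,x_5)$. The right-hand composite $T\circ(T\otimes\mathbbm 1^{\otimes 2})$ gives, by trilinearity,
\[
(a_1\cdots a_5,\; a_2a_3a_4a_5\,x_1 + a_4a_5[x_1,x_2,x_3] + a_2a_3[x_1,x_4,x_5] + [[x_1,x_2,x_3],x_4,x_5]).
\]
For the left-hand composite $T\circ(T\otimes T\otimes T)\circ\shuffle\circ(\mathbbm 1^{\otimes 3}\otimes\Delta_3^{\otimes 2})$, the key observation is that each summand of $\Delta_3(a_4,x_4)$ (and of $\Delta_3(a_5,x_5)$) carries the vector $x_4$ in a single slot padded by copies of $(1,0)$; after the shuffle this distributes one active copy of $x_4$ and one of $x_5$ to each of the three groups. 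Running the three inner applications of $T$ and the outer $T$, and discarding the terms killed by the bracket, the surviving contributions assemble into
\[
(a_1\cdots a_5,\; a_2a_3a_4a_5\,x_1 + a_4a_5[x_1,x_2,x_3] + a_2a_3[x_1,x_4,x_5] + [[x_1,x_4,x_5],x_2,x_3] + [x_1,[x_2,x_4,x_5],x_3] + [x_1,x_2,[x_3,x_4,x_5]]).
\]
All scalar-weighted terms match the right-hand side verbatim, so the two composites agree if and only if $[[x_1,x_2,x_3],x_4,x_5]=[[x_1,x_4,x_5],x_2,x_3]+[x_1,[x_2,x_4,x_5],x_3]+[x_1,x_2,[x_3,x_4,x_5]]$, which is precisely the Filippov identity. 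This bookkeeping---tracking which of the nine summands of $\Delta_3^{\otimes 2}$ survive and matching them after the shuffle---is the main obstacle, and it plays the role that the Jacobi identity played in the binary composite case.

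Finally, for invertibility I would exhibit a reverse operation $\tilde T$ satisfying the reversibility condition of the preceding definition. In contrast to the binary situation, where $\tilde T$ required a sign twist, here the skew-symmetry of the ternary bracket forces the natural candidate to be $\tilde T=T$ itself, so that $(X,\tilde T)$ is automatically TSD. Checking reversibility on simple tensors, the expansion of $T(T((a,x)\otimes(b,y)^{(2)}\otimes(c,z)^{(2)})\otimes(c,z)^{(1)}\otimes(b,y)^{(1)})$ has four terms, two of which vanish because the bracket kills a padding entry; the two survivors are $(abc,bcx+[x,z,y])$ and $(0,[x,y,z])$, and the skew-symmetry $[x,z,y]=-[x,y,z]$ makes the bracket contributions cancel, leaving $(abc,bcx)=\epsilon(b,y)\epsilon(c,z)\,(a,x)$. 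The symmetric statement with the roles of $T$ and $\tilde T$ exchanged holds identically since $\tilde T=T$, so $(X,T)$ is a ternary rack and $T$ is invertible.
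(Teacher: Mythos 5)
Your proposal is correct and follows essentially the same route as the paper: a direct check that $T$ is a coalgebra morphism (your count of three surviving summands out of eight for $\Delta$ matches the paper's five out of twenty-seven for $\Delta_3$), a direct verification of the TSD diagram reducing to the Filippov identity, and an explicit inverse obtained from skew-symmetry of the bracket. The only cosmetic difference is that the paper names the inverse $T\circ(\mathbbm 1\otimes\tau)$ while you take $\tilde T=T$ and let the swap $\ldots\otimes z^{(1)}\otimes y^{(1)}$ already present in the reversibility condition do the work; these describe the same composite map, and your computation of the two surviving terms $(abc,bcx+[x,z,y])$ and $(0,[x,y,z])$ agrees with the paper's.
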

\begin{proof}
	First we need to prove that $T$ is a morphism of coalgebras. This means that the following diagram
	\begin{center}
		\begin{tikzcd}
			X\otimes X\otimes X\arrow[rr,"\shuffle \circ \Delta_3^{\otimes 3}"]\arrow[d,"T"] & & X^{\otimes 3}\otimes X^{\otimes 3}\otimes X^{\otimes 3}\arrow[d,"T^{\otimes 3}"]\\
			X\arrow[rr,"\Delta_3"] & & X\otimes X\otimes X
			\end{tikzcd}
	\end{center}
commutes, where $\shuffle = \bigl(\begin{smallmatrix}
1 & 2 & 3 & 4 & 5 &6 & 7 & 8 & 9 \\
1& 4 & 7 & 2 & 5 & 8 & 3 & 7 & 9
\end{smallmatrix}\bigr)$. 
\begin{figure}[h!]
	\begin{eqnarray*}
		\begin{tikzpicture}[scale = 0.7,baseline={([yshift=-0.1cm]current bounding box.center)},vertex/.style={anchor=base,
			circle,fill=black!25,minimum size=18pt,inner sep=2pt}]
		\draw (-3,2) -- (0,0);
		\draw (0,2) -- (0,0);
		\draw (3,2) -- (0,0);
		
		\draw (0,0) -- (0,-2);
		
		\draw (-3,-4) -- (0,-2);
		\draw (0,-4) -- (0,-2);
		\draw (3,-4) -- (0,-2);
		
		\node (a)  at (0.5,0) {$T$};
		\node (a)  at (0.5,-2) {$\Delta_3$};
		\end{tikzpicture}
		&=&
		\begin{tikzpicture}[scale=0.4,baseline={([yshift=-0.1cm]current bounding box.center)},vertex/.style={anchor=base,
			circle,fill=black!25,minimum size=18pt,inner sep=2pt}]
		\draw (-7,13) -- (-5,10);
		\draw (0,13) -- (0,10);
		\draw (7,13) -- (5,10);
		
		\draw (-5,10) -- (-5,5);
		\draw (-5,10) -- (0,5);
		\draw (-5,10) -- (5,5);
		
		\draw (0,10) -- (-5,5);
		\draw (0,10) -- (0,5);
		\draw (0,10) -- (5,5);
		
		\draw (5,10) -- (-5,5);
		\draw (5,10) -- (0,5);
		\draw (5,10) -- (5,5);
		
		\draw (-5,5) -- (-5,0);
		\draw (0,5) -- (0,0);
		\draw (5,5) -- (5,0);
		
		\node (a)  at (-5.7,10) {$\Delta_3$};
		\node (a)  at (0.8,10.2) {$\Delta_3$};
		\node (a)  at (5.7,10) {$\Delta_3$};
		
		\node (a)  at (-5.6,5) {$T$};
		\node (a)  at (0.7,5) {$T$};
		\node (a)  at (5.6,5) {$T$};
		\end{tikzpicture}
	\end{eqnarray*}
\caption{$T$ is a morphism of ternary coalgebras}
\label{fig:coalgebramprhism}
\end{figure}
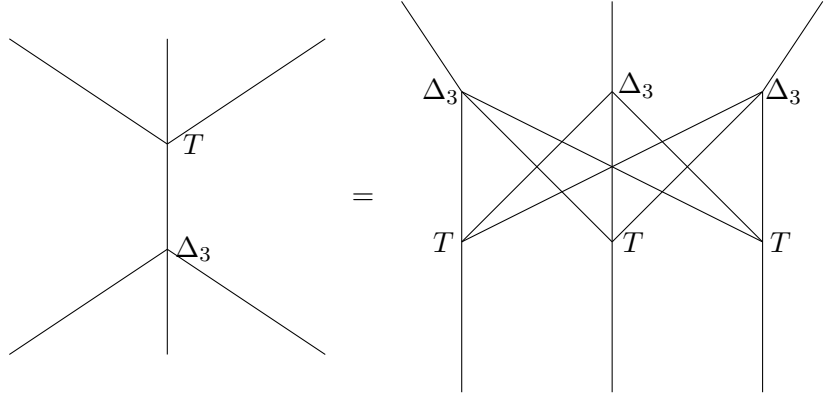
 This commutative diagram corresponds to the string diagram given in Figure~\ref{fig:coalgebramprhism}.
Since, in general, the ternary Lie bracket is not decomposable into the composition of two binary operations, as in Section~\ref{sec:LieTSD}, it is not possible to verify the commutativity of the diagram by verifying a binary intermediate step. It is useful to consider the following identities that follow directly from the definitions
\begin{eqnarray}
 T((a,x)\otimes (1,0)\otimes (1,0)) &=& (a,x)\label{eqn:1}\\
T((a,x)\otimes (b,0)\otimes (0,z)) &=& (0,0)\\
T((a,x)\otimes (0,y)\otimes (c,z)) &=& T((a,x)\otimes (b,y)\otimes (0,z)) = (0,[x,y,z])\\
T((1,0)\otimes (b,y)\otimes (c,z)) &=& (bc,0).\label{eqn:4}
\end{eqnarray}
The lower half of the perimeter of the diagram, applied to the basis vector $(a,x)\otimes (b,y)\otimes (c,z)$, is seen to give
\begin{eqnarray*}
 \Delta_3\circ T((a,x)\otimes (b,y)\otimes (c,z)) &=& \Delta_3(abc,bcx+[x,y,z]) \\
 &=& (abc,bcx + [x,y,z])\otimes (1,0)\otimes (1,0)\\
 && + (1,0)\otimes (0,bcx + [x,y,z])\otimes (1,0)\\
 && + (1,0)\otimes (1,0)\otimes (0,bcx+[x,y,z]).
\end{eqnarray*}
The upper half of the perimeter is given by a sum of 27 terms, corresponding to the fact that it is the tensor product of the output of three copies of $\Delta_3$, each of which is given by the sum of three terms. We do not write all the terms, but applying Equations~(\ref{eqn:1})--(\ref{eqn:4}) the only ones that are nontrivial are those corresponding to the terms $111$, $211$, $222$, $311$ and $333$, where the triples of numbers indicate the lexicographical ordering of the 27 terms in the natural way. We obtain
\begin{eqnarray*}
\lefteqn{T^{\otimes 3}\shuffle\Delta_3^{\otimes 3}(a,x)\otimes (b,y)\otimes (c,z)} \\
&=& (abc,bcx+[x,y,z])\otimes (1,0)\otimes (1,0) + (bc,0)\otimes (0,x)\otimes (1,0) \\
&& + (1,0)\otimes (0,[x,y,z])\otimes (1,0) + (bc,0)\otimes (1,0)\otimes (0,x)\\
&& + (1,0)\otimes (1,0)\otimes (0,[x,y,z]).
\end{eqnarray*}
Grouping second and third summand, as well as fourth and fifth summand in the previous equation gives the required equality. So, $T$ is a coalgebra morphism. 

We need to prove now that $T$ satisfies the TSD property. Once again we consider a simple tensor of type $(a,x)\otimes (b,y)\otimes (c,z)\otimes (d,u)\otimes (e,v)$. The LHS of the TSD condition reads
\begin{eqnarray*}
\lefteqn {T(T((a,x)\otimes (b,y)\otimes (c,z))\otimes (d,u)\otimes (e,v))}\\
&=& (abcde, bcdex + de[x,y,z]+ bc [x,u,v] + [[x,y,z],u,v]),
\end{eqnarray*}
while the RHS is a sum of 9 terms, 6 of which are trivial. The nontrivial terms give
\begin{eqnarray*}
\lefteqn{T(T\otimes T\otimes T)\shuffle_3 (\mathbbm 1^{\otimes 3}\otimes \Delta_3^{\otimes 2})(a,x)\otimes (b,y)\otimes (c,z)\otimes (d,u)\otimes (e,v)}\\
&=& (abcde, bcdex + de[x,y,z] + bc[x,u,v] + [[x,u,v],y,z])\\
&& + (0,[x,[y,u,v],z]) + (0,[x,y,[z,u,v]]).
\end{eqnarray*}
It follows that LHS and RHS differ by one application of the Filippov Identity for $n=3$, and $T$ satisfies the TSD property. 

Lastly, we show that $T\circ (\mathbbm 1\otimes \tau)$ is the required inverse of $T$, where $\tau$ is the canonical switching map on a tensor product of vector spaces. On simple tensors $(a,x)\otimes (b,y)\otimes (c,z)$ we have
\begin{eqnarray*}
\lefteqn{T(T((a,x)\otimes (b,y)^{(1)}\otimes (c,z)^{(1)})\otimes (c,z)^{(2)}\otimes (b,y)^{(2)})}\\
&=& (abc,bcx+[x,y,z]) + T((ac,cx)\otimes (1,0)\otimes (0,y))\\
&& + T((ab,bx)\otimes (0,z)\otimes (1,0)) + T((a,x)\otimes (0,z)\otimes (0,y)) \\
&=& (abc,bcx+[x,y,z]) + (0,[x,z,y])\\
%&=& (abc,bcx+[x,y,z] - [x,y,z])\\
&=& (abc,bcx) \\
&=& \epsilon(b,y) \epsilon (c,z) \cdot (a,x).
\end{eqnarray*}
The proof of the lemma is complete.
\end{proof}

Let $L$ be a $3$-Lie algebra and let $X = \mathbbm k \oplus L$ with the ternary operation $T$ defined above. Define the operator $R: X^{\otimes 2}\otimes X^{\otimes 2}\longrightarrow X^{\otimes 2}\otimes X^{\otimes 2}$ on basis vectors as
\begin{eqnarray*}
(a,x)\otimes (b,y) \otimes (c,z) \otimes (d,w) &\mapsto&
 (c,z)^{(1)}\otimes (d,w)^{(1)} \\
 &&\otimes T((a,x)\otimes (c,z)^{(2)}\otimes (d,w)^{(2)})\\
 && \otimes T((b,y)\otimes (c,z)^{(3)}\otimes (d,w)^{(3)}).
\end{eqnarray*}

Observe that this is, formally, the same definition as for the case obtained by composing binary SD operations. Then we have the following result. 

\begin{corollary}
	The operator $R: X^{\otimes 2}\otimes X^{\otimes 2}\longrightarrow X^{\otimes 2}\otimes X^{\otimes 2}$ is a Yang-Baxter operator. 
\end{corollary}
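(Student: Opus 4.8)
The plan is to establish the two defining properties of a Yang-Baxter operator in turn: that $R$ satisfies the braid equation, and that $R$ is invertible. The decisive observation is that the operator $R$ here is built from the comultiplication $\Delta$ and the ternary coalgebra morphism $T$ by exactly the same formula as the operator studied in Subsection~\ref{subsec:braid}; only the internal formula for $T$ has changed. I would therefore argue that the proofs of Theorem~\ref{thm:YBTSD} and Theorem~\ref{thm:Rinv} apply essentially verbatim, once the structural inputs those proofs rely on are supplied by Lemma~\ref{lem:3Lie}.

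For the braid equation, I would invoke the argument of Theorem~\ref{thm:YBTSD}. Inspecting that proof reveals that it uses only three facts about $T$: that $T$ is a morphism of coalgebras, that $T$ satisfies the TSD property, and that $\Delta_3$ is invariant under the transposition of its second and third entries. The first two are precisely the content of Lemma~\ref{lem:3Lie}, while the third is supplied by Lemma~\ref{lem:cocomm}, which applies unchanged because the comonoid structure $(X,\Delta)$ is identical to the one used in Section~\ref{sec:LieTSD}. Hence the two sides of the braid equation differ, after using that $T$ is a coalgebra morphism and coassociativity, only by the shuffle permutation $\shuffle$, and this reshuffling is absorbed exactly as before using the TSD property together with the symmetry of $\Delta_3$.

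For invertibility I would follow the scheme of Theorem~\ref{thm:Rinv}. The remark preceding that theorem already records that its proof depends only on the TSD object being reversible, with Lemma~\ref{lem:cocomm} inserted to handle the failure of cocommutativity. In the present setting the invertibility computation inside the proof of Lemma~\ref{lem:3Lie} shows that $T$ is reversible with respect to \emph{itself}, that is, one may take $\tilde T = T$, since
$$
T(T((a,x)\otimes (b,y)^{(1)}\otimes (c,z)^{(1)})\otimes (c,z)^{(2)}\otimes (b,y)^{(2)}) = \epsilon(b,y)\epsilon(c,z)(a,x).
$$
I would then define the candidate inverse $\tilde R$ exactly as in Theorem~\ref{thm:Rinv} with $\tilde T$ replaced by $T$, and run the same chain of equalities: naturality of the switching morphism and the coalgebra-morphism property of $T$ rewrite $\tilde R\circ R$ as a shuffle composed with $\mathbbm 1^{\otimes 2}\otimes \Delta_4\otimes \Delta_4$, after which the symmetry of $\Delta_4$ from Lemma~\ref{lem:cocomm} and the reversibility identity above collapse the expression to the identity.

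The one point I expect to require genuine care, rather than a black-box citation, is matching the placement of Sweedler components in the reversibility identity to the form demanded by the argument of Theorem~\ref{thm:Rinv}: the reversibility condition as stated pairs the outer copies $z^{(1)}, y^{(1)}$ against the inner $y^{(2)}, z^{(2)}$, whereas the identity proved in Lemma~\ref{lem:3Lie} uses the opposite superscripts. For the comultiplication at hand both assignments collapse to the same scalar multiple of $(a,x)$ after applying Lemma~\ref{lem:cocomm}, so the discrepancy is harmless; but it is exactly the place where the non-cocommutativity of $\Delta$ must be tracked explicitly, and I would verify both orderings by the same term-by-term evaluation (using the identities for $T$ on the standard summands of $\Delta$ and the skew-symmetry of the ternary bracket) that underlies the invertibility claim in Lemma~\ref{lem:3Lie}.
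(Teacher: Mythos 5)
Your proposal is correct and follows the paper's own route: the paper proves this corollary precisely by feeding Lemma~\ref{lem:3Lie} into Theorem~\ref{thm:YBTSD} and noting that the comultiplication is unchanged from Section~\ref{sec:LieTSD}, so Lemma~\ref{lem:cocomm} still applies. You additionally spell out the invertibility half via the scheme of Theorem~\ref{thm:Rinv} with $\tilde T = T$ --- a step the paper's one-line proof leaves implicit --- and your caveat about matching the Sweedler superscripts in the reversibility identity is well taken, since both orderings do evaluate term by term to $\epsilon(b,y)\epsilon(c,z)\,(a,x)$ using the identities (\ref{eqn:1})--(\ref{eqn:5}) and skew-symmetry of the bracket.
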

\begin{proof}
	First, we observe that the proofs of Theorem~\ref{thm:YBTSD} and Theorem~\ref{thm:Rinv} do not depend on the specific form of the TSD operation $T$, but rather on the fact that $T$ is a reversible TSD operation, and the formal definition of the Yang-Baxter operator $R$ from $T$. Therefore, applying Lemma~\ref{lem:3Lie} we can follow the same procedure and complete the proof, since the operator at hand is defined in the same way as in the case of the operator of Theorem~\ref{thm:YBTSD} and Theorem~\ref{thm:Rinv}.
\end{proof}

\section{Framed knot and link invariants}\label{sec:invariants}

It is known that given a YB operator $R$ on a vector space $V$, it is possible to associate representations of the braid group, as well as link invariants to it \cite{Tur}. In this section we show that the YB operators $R: X^{\otimes 2} \otimes X^{\otimes 2} \longrightarrow X^{\otimes 2}\otimes X^{\otimes 2}$ associated to the Lie algebras of Section~\ref{sec:LieTSD} and Section~\ref{sec:ternaryLie} can be applied to define representations of the framed braid group and, consequently, invariants of framed links. To this purpose, we need to define an invertible twisting map $X^{\otimes 2} \longrightarrow X^{\otimes 2}$ that commutes with braidings induced by the YB operator $R$. 

Let $(X,T)$ be a TSD object as in Section~\ref{sec:LieTSD} or \ref{sec:ternaryLie}. We define the twist map $\theta : X\otimes X \longrightarrow X\otimes X$ by extending the following assignment by linearity
\begin{eqnarray*}
\lefteqn{(a,x)\otimes (b,y) \mapsto}\\
&& T((a,x)^{(1)}\otimes (a,x)^{(2)}\otimes (b,y)^{(2)}) \otimes T((b,y)^{(1)}\otimes (a,x)^{(3)}\otimes (b,y)^{(3)}).
\end{eqnarray*}
The geometric interpretation of this operation is that of a ribbon self-crossing, which introduces a full twist, as it is shown in Figure~\ref{fig:slicedtwist}. Here, the diagrams are sliced horizontally in order to better depict the successive decomposition of the maps that give rise to the twist.
 	\begin{figure}
 		\begin{center}
 			\begin{tikzpicture}
 			\draw  (0,0) arc (0:180:1cm and 1cm); 
 			\draw  (-0.3,0) arc (0:180:0.7cm and 0.7cm); 
 			
 			\draw (1,2) -- (1,0);
 			\draw (1.3,2) -- (1.3,0);
 			
 			\draw[dashed] (-2,-0.5) -- (1.5,-0.5);
 			
 			\draw (-2,-1) -- (-2,-3);
 			\draw (-1.7,-1) -- (-1.7,-3);
 			
 			\draw (1,-1) -- (0,-3);
 			\draw (1.3,-1) -- (0.3, -3);
 			
 			\draw (0,-1) -- (0.4,-1.8);
 			\draw (0.3,-1) -- (0.56,-1.55);
 			
 			\draw (0.72,-2.32) -- (1,-3); 
 			\draw (0.92,-2.15) -- (1.3,-3);
 			
 			\draw[dashed] (-2,-3.5) -- (1.5,-3.5);
 			
 			\draw  (-2,-4) arc (180:360:1cm and 1cm); 
 			\draw  (-1.7,-4) arc (180:360:0.7cm and 0.7cm); 
 			
 			\draw (1,-4) -- (1,-6);
 			\draw (1.3,-4) -- (1.3,-6);
 			\end{tikzpicture}
 		\end{center}
 	\caption{Geometric definition of the twist map, where the horizontal slicing corresponds to the decomposition into creation/annihilation maps and $R$}
 	\label{fig:slicedtwist}
 	\end{figure}
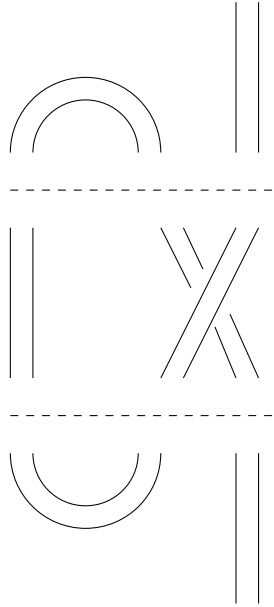
 	In fact, let $\{w_i\}$ denote a basis of $X$, e.g. $\{(1,0),(0,x_1), \ldots , (0,x_n)\}$, and let $w_i^*$ denote the functional $w_i^*: X\longrightarrow \mathbbm k$ dual to $w_i$. We have natural pairing and copairing maps on $X^{\otimes 2}$, denoted by the symbols $\doublecup$ and $\doublecap$ respectively, given by the equations $\doublecup(w_k^*\otimes w_i^*\otimes w_j\otimes w_\ell) = \delta_{ij}\delta_{k\ell}$ and $\doublecap(1) = \sum_{i,j} w_i^*\otimes w_j^*\otimes w_j\otimes w_i$.  Let us indicate basis vectors in $X$ as $u$ and $v$ for simplicity.
Then, on simple tensors, we have
\begin{eqnarray*}
u\otimes v &\mapsto& w_i^*\otimes w_j^*\otimes w_j\otimes w_i\otimes u\otimes v\\
&\mapsto& w_i^*\otimes w_j^*\otimes u^{(1)}\otimes v^{(1)}\otimes T(w_j\otimes u^{(2)}\otimes v^{(2)})\otimes  T(w_i\otimes u^{(3)}\otimes v^{(3)})\\
&\mapsto& T(u^{(1)}\otimes u^{(2)}\otimes v^{(2)})\otimes  T(v^{(1)}\otimes u^{(3)}\otimes v^{(3)}),
\end{eqnarray*}
where each step corresponds to the slicing of Figure~\ref{fig:slicedtwist}. We note that although this geometric interpretation requires some regularity condition on $X$, i.e. existence of dual basis as it is enforced for instance by having a finitely generated projective module over $\mathbbm k$, the definition of $\theta$ itself does not make use of any assumption and it is applicable to any Lie algebra over a unital ground ring $\mathbbm k$. 

We want to argue now that $\theta$ can be slid beneath and above crossings, in the sense that the following holds.
\begin{lemma}
	Let $(X,T)$ denote the TSD object arising from a binary Lie algebra or a $3$-Lie algebra, as in Sections~\ref{sec:LieTSD} and~\ref{sec:ternaryLie}. Then we have
	\begin{eqnarray}
	R \circ (\theta\otimes \mathbbm 1) &=& (\mathbbm 1\otimes \theta) \circ R\\ \label{eqn:leftsliding}
	R \circ (\mathbbm 1\otimes \theta) &=& (\theta\otimes \mathbbm 1)\circ R.
	\end{eqnarray} 
\end{lemma}
\begin{proof}
 It is seen, by direct computation, that when applied to a basis vector $(a,x)\otimes (b,y)$ the LHS and RHS of Equation~\ref{eqn:leftsliding} differ by an application of the TSD property of $T$, upon rearranging the terms of coproduct that are not in the first entry, and applying the fact that $T$ is a coalgebra morphism. 
\end{proof}

Let us define the map $\Phi_n : \mathbb {FB}_n \longrightarrow {\rm Aut}(X^{\otimes 2n})$ from the braid group to the automorphism group of $X^{\otimes 2n}$ for each $n$, where $X$ is the TSD object defined from a binary or ternary Lie algebra as above. We send the generators $\sigma_i$ to the automorphism $\mathbbm 1^{\otimes (2i-2)}\otimes R \otimes \mathbbm 1^{\otimes (2n-2i+2)}$, and the generator $t_i$, corresponding to the $n$-tuple $(0, \ldots, 1, \ldots, 0)$ when using the definition of $\mathbb {FB}_n $ as semi-direct product, to $\mathbbm 1^{\otimes (2i-2)}\otimes \theta\otimes \mathbbm 1^{\otimes (2n-2i)}$, where $1$ is assumed to be in the $i^{\rm th}$ position. We obtain therefore a map $\Phi_{\infty} : \bigcup_n \mathbbm {FB}_n \longrightarrow \bigcup_n {\rm Aut}(X^{\otimes 2n})$, where $\mathbbm {FB}_\infty := \bigcup_n \mathbbm {FB}_n$ is the infinite framed braid group. The symbol $\bigcup_n$ here is an improper way to indicate the colimit of groups, where the injection $\mathbb{FB}_n \hookrightarrow \mathbb{FB}_{n+1}$ is the natural one, and ${\rm Aut}(X^{\otimes 2n}) \hookrightarrow {\rm Aut}(X^{\otimes 2(n+1)})$ is obtained by tensoring (on the right) with the identity map $\mathbbm 1^{\otimes 2}$. 

We need a preliminary lemma, before giving the main result of this section. 

\begin{lemma}\label{lem:mixed}
	Let $(X,T)$ be a ternary rack object in the category of vector spaces, then $T$ and its inverse $\tilde T$ satisfy a ``mixed'' distributivity condition
	\begin{eqnarray*}
	\lefteqn{T(\tilde T(x\otimes y\otimes z)\otimes u\otimes v)} \\
	&=& \tilde T(T(x\otimes u^{(1)}\otimes v^{(1)})\otimes T(y\otimes u^{(2)}\otimes v^{(2)}) \otimes T(z\otimes u^{(3)}\otimes v^{(3)})),
	\end{eqnarray*}
for each $x,y,z,u,v\in X$, and a similar equation holds exchanging the roles of $T$ and $\tilde T$. 
\end{lemma}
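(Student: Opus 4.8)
The plan is to derive the mixed identity from the self-distributivity of $\tilde T$ together with the reversibility of $T$ with respect to $\tilde T$, rather than by a direct bracket computation; this is forced on us since the statement concerns an abstract ternary rack object, so only the axioms (both $T$ and $\tilde T$ are TSD coalgebra morphisms, and each reverses the other) are available. The guiding principle is the set-theoretic analogue for a binary rack, where right multiplication by a fixed element is an automorphism of both the operation $*$ and its inverse, whence $(x\,\bar{*}\,y)*z=(x*z)\,\bar{*}\,(y*z)$. Here the role of right multiplication is played by the smeared operator $w\mapsto T(w\otimes u\otimes v)$, whose two-sided inverse is $w\mapsto \tilde T(w\otimes v\otimes u)$ (smeared over the coproduct) by the reversibility condition of Proposition~\ref{pro:reverse}.

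First I would record that, by reversibility, the operators $\phi := T(-\otimes u^{(2)}\otimes v^{(2)})$ and $\psi := \tilde T(-\otimes v^{(1)}\otimes u^{(1)})$ are mutually inverse up to the counit scalars $\epsilon(u)\epsilon(v)$, and that, since $T$ and $\tilde T$ are morphisms of coalgebras, these smeared operators compose coherently with $\Delta$. In particular $\psi$ is invertible, so it suffices to prove the asserted equality after applying the reversing operator $\tilde T(-\otimes v\otimes u)$ (suitably smeared) to both sides.

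Second, I would collapse each side. Applied to the left-hand side $T(\tilde T(x\otimes y\otimes z)\otimes u\otimes v)$, the reversing operator reduces immediately, by a single use of reversibility, to $\epsilon(u)\epsilon(v)\,\tilde T(x\otimes y\otimes z)$. Applied to the right-hand side, I would first invoke the self-distributivity of $\tilde T$ (part of the ternary rack structure) to distribute the reversing operation across the outer $\tilde T$; this turns $\tilde T\bigl(\tilde T(T(x,\cdots),T(y,\cdots),T(z,\cdots))\otimes v\otimes u\bigr)$ into an outer $\tilde T$ of three terms of the shape $\tilde T\bigl(T(x\otimes u^{(i)}\otimes v^{(i)})\otimes v^{(i)\prime}\otimes u^{(i)\prime}\bigr)$, and each of these collapses by reversibility to $\epsilon x$, $\epsilon y$, $\epsilon z$ respectively. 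Pulling the counit scalars through the trilinear $\tilde T$, the right-hand side likewise becomes $\epsilon(u)\epsilon(v)\,\tilde T(x\otimes y\otimes z)$, so the two sides agree after applying $\psi$, and invertibility of $\psi$ gives the claim.

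The main obstacle is the coproduct bookkeeping: the single parameter pair $(u,v)$ is consumed simultaneously by the distribution in the statement, by the reversing operator, and by the three pairwise reversibility cancellations, so all these factors must be viewed as coming from one high iterated coproduct $\Delta_N(u)$ and $\Delta_N(v)$ and then realigned so that each forward factor feeding a copy of $T$ is matched with its correct reverse partner feeding the adjacent $\tilde T$. Because $\Delta$ is not cocommutative this realignment is not automatic; however, in every occurrence the parameters $u$ and $v$ sit in the non-first slots of $T$ and $\tilde T$, so Lemma~\ref{lem:cocomm} lets us permute these coproduct factors freely (with coassociativity used to re-bracket $\Delta_N$), which is precisely what makes the three cancellations line up. Once this alignment is carried out both sides coincide. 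The companion identity with the roles of $T$ and $\tilde T$ interchanged follows by the same argument, using the reversibility of $\tilde T$ with respect to $T$ that is built into the definition of a ternary rack.
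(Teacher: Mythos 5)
Your argument is correct, and it uses exactly the same three ingredients as the paper's proof --- the reversibility of $T$ with respect to $\tilde T$ (in both directions), the self-distributivity of $\tilde T$, and Lemma~\ref{lem:cocomm} to realign the non-leading coproduct factors of $u$ and $v$ --- but it organizes them in the opposite direction. The paper proves the identity by a forward chain of equalities: it inserts $\tilde T\circ T=\epsilon\epsilon\cdot\mathbbm 1$ three times inside the left-hand side, reorders coproduct legs via Lemma~\ref{lem:cocomm}, reads the TSD condition for $\tilde T$ in the ``collapsing'' direction, and cancels the outer pair by one more application of reversibility. You instead post-compose both sides with the smeared reversing operator $\psi=\tilde T(-\otimes v^{(1)}\otimes u^{(1)})$, collapse each side to $\epsilon(u)\epsilon(v)\,\tilde T(x\otimes y\otimes z)$, and cancel $\psi$. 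The computations are essentially each other read backwards, but your version carries one extra obligation the paper avoids: cancelling $\psi$ is not literally pointwise invertibility for fixed $u,v$, since $u$ and $v$ are simultaneously consumed by the coproducts inside $L$ and $R$; one must check that post-composing a map $X^{\otimes 5}\to X$ with the smeared $\psi$ (after splitting off fresh coproduct legs) is injective on the relevant maps. This does follow --- apply the smeared $T(-\otimes u^{(1)}\otimes v^{(1)})$ once more, use reversibility in the other direction, and absorb the resulting counits via $(\epsilon\otimes\mathbbm 1)\circ\Delta=\mathbbm 1$, with Lemma~\ref{lem:cocomm} again handling the alignment --- so there is no gap, but you should state this cancellation step explicitly rather than folding it into the phrase ``invertibility of $\psi$ gives the claim.'' What your organization buys is a cleaner conceptual parallel with the set-theoretic binary rack argument (right translation is an automorphism of both $*$ and its inverse); what the paper's forward rewriting buys is that every step is an equality of elements, with no auxiliary injectivity claim to verify.
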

\begin{proof}
	We have the following chain of equalities
	\begin{eqnarray*}
	\lefteqn{T(\tilde T(x\otimes y\otimes z)\otimes u\otimes v)}\\
	 &=& T(\tilde T(\tilde T(T(x\otimes u^{(2)}\otimes v^{(2)})\otimes v^{(3)}\otimes u^{(3)})\\
	 &&\otimes \tilde T(T(y\otimes u^{(4)}\otimes v^{(4)})\otimes v^{(5)}\otimes u^{(5)})\\
	 &&\otimes \tilde T(T(z\otimes u^{(6)}\otimes v^{(6)})\otimes v^{(7)}\otimes u^{(7)}))\\
	 &&\otimes u^{(1)}\otimes v^{(1)})\\
	 &=& T(\tilde T(\tilde T(T(x\otimes u^{(5)}\otimes v^{(5)})\otimes v^{(2)}\otimes u^{(2)})\\
	 &&\otimes \tilde T(T(y\otimes u^{(6)}\otimes v^{(6)})\otimes v^{(3)}\otimes u^{(3)})\\
	 &&\otimes \tilde T(T(z\otimes u^{(7)}\otimes v^{(7)})\otimes v^{(4)}\otimes u^{(4)}))\\
	 &&\otimes u^{(1)}\otimes v^{(1)})\\
	 &=& T(\tilde T(\tilde T(T(x\otimes u^{(3)}\otimes v^{(3)})\otimes T(y\otimes u^{(4)}\otimes v^{(4)})\\
	 &&\otimes T(z\otimes u^{(5)}\otimes v^{(5)})) \otimes v^{(2)}\otimes u^{(2)})\otimes u^{(1)}\otimes v^{(1)})\\
	 &=& \tilde T(T(x\otimes u^{(1)}\otimes v^{(1)})\otimes T(y\otimes u^{(2)}\otimes v^{(2)})\otimes T(z\otimes u^{(3)}\otimes v^{(3)})),
	\end{eqnarray*}
where the first equality uses the reversibility of $T$ with respect to $\tilde T$ thrice, the second equality is obtained by rearranging terms corresponding to the output of comultiplications, the third equality utilizes the TSD condition for $\tilde T$ and the last equality is obtained by another application of the reversibility condition of $T$ with respect to $\tilde T$. 
\end{proof}
\begin{remark}
	{\rm 
	This is a ternary analogue (in the category of vector spaces) of the known result for binary racks where the two operations $\lt$ and $\lt^{-1}$ satisfy an equation similar to the self-distributive property combined. See \cite{AC}, Lemma~70 for a proof in the binary and set-theoretic case.
}
\end{remark}

\begin{theorem}\label{thm:representation}
	The map $\Phi_\infty$ is a representation of the infinite framed braid group.
\end{theorem}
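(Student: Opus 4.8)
The plan is to verify that $\Phi_\infty$ respects every defining relation of the infinite framed braid group $\mathbb{FB}_\infty = \mathbb Z^{(\infty)}\rtimes \mathbb B_\infty$. Since a representation out of a group given by generators and relations is well defined precisely when the images of the generators satisfy the relations, it suffices to check the three families of relations: the braid relations among the $\sigma_i$, the commutativity relations among the $t_i$, and the mixed relations $t_i\sigma_j = \sigma_j t_{\tau_j(i)}$. Throughout I would work with the generators' images $\Phi_\infty(\sigma_i) = \mathbbm 1^{\otimes(2i-2)}\otimes R\otimes \mathbbm 1^{\otimes(2n-2i+2)}$ and $\Phi_\infty(t_i) = \mathbbm 1^{\otimes(2i-2)}\otimes\theta\otimes\mathbbm 1^{\otimes(2n-2i)}$, and reduce each relation to a local identity on a few adjacent tensor factors, after which the identity tensor factors play no role.

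First I would dispatch the braid relations. The far-commutativity $\sigma_i\sigma_j=\sigma_j\sigma_i$ for $|i-j|\geq 2$ is immediate, since the two copies of $R$ then act on disjoint blocks of tensor factors and hence commute by functoriality of $\otimes$. The relation $\sigma_i\sigma_{i+1}\sigma_i=\sigma_{i+1}\sigma_i\sigma_{i+1}$ is exactly the Yang-Baxter equation for $R$, namely $(R\otimes\mathbbm 1)\circ(\mathbbm 1\otimes R)\circ(R\otimes\mathbbm 1)=(\mathbbm 1\otimes R)\circ(R\otimes\mathbbm 1)\circ(\mathbbm 1\otimes R)$, which is Theorem~\ref{thm:YBTSD} (together with Theorem~\ref{thm:Rinv}, which guarantees $R\in {\rm Aut}(X^{\otimes 2n})$ so that the images genuinely land in the automorphism group). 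Note here that because each strand is doubled, $R$ acts on four tensor factors at a time and $\sigma_i$ corresponds to a shift by two factors; one must therefore check that the block structure of the embeddings matches, so that $\Phi_\infty(\sigma_i)$ and $\Phi_\infty(\sigma_{i+1})$ overlap in exactly the pattern needed to invoke the braid equation for $R$.

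Next, the relations involving the framing generators. The commutativity $t_it_j=t_jt_i$ for $i\neq j$ follows, just as the far-commutativity of the $\sigma_i$, from the fact that the two copies of $\theta$ act on disjoint pairs of factors, and when $i=j$ the relation is vacuous. The genuinely new relations are the mixed ones $t_i\sigma_j=\sigma_jt_{\tau_j(i)}$, which encode the fact that a framing twist can be slid past a crossing at the cost of relabeling the strand index by the transposition $\tau_j=(j\ j+1)$. These reduce, in the doubled setting, precisely to the two sliding identities
\begin{eqnarray*}
R\circ(\theta\otimes\mathbbm 1) &=& (\mathbbm 1\otimes\theta)\circ R,\\
R\circ(\mathbbm 1\otimes\theta) &=& (\theta\otimes\mathbbm 1)\circ R,
\end{eqnarray*}
already established in Equation~\ref{eqn:leftsliding} as a consequence of Lemma~\ref{lem:cocomm}, the fact that $T$ is a coalgebra morphism, and the TSD property. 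The case distinctions ($i<j$, $i=j$, $i=j+1$, $i>j+1$) correspond to whether the twist sits to the left of, within, or to the right of the block on which $\sigma_j$ acts, with the two nontrivial cases handled by the two displayed sliding equations and the disjoint-block cases being trivial.

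The main obstacle is the bookkeeping of indices caused by strand doubling: each braid-group strand corresponds to two tensor factors, so I must be careful that the embeddings $\mathbb{FB}_n\hookrightarrow\mathbb{FB}_{n+1}$ and ${\rm Aut}(X^{\otimes 2n})\hookrightarrow{\rm Aut}(X^{\otimes 2(n+1)})$ are compatible, ensuring $\Phi_\infty$ is well defined on the colimit and not merely on each finite stage. Concretely, one checks that $\Phi_{n+1}$ restricted to the image of $\mathbb{FB}_n$ agrees with $\mathbbm 1^{\otimes 2}\otimes(-)$ applied to $\Phi_n$, so that the $\Phi_n$ assemble into a single homomorphism on $\mathbb{FB}_\infty$; this is a routine but essential verification. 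Once the local identities above are in hand and the index alignment is confirmed, the relations all hold and $\Phi_\infty$ is a genuine group homomorphism, i.e. a representation of $\mathbb{FB}_\infty$.
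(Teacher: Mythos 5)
There is a genuine gap: you never address the invertibility of the twist $\theta$, and this is precisely the point that the paper's proof of Theorem~\ref{thm:representation} identifies as the \emph{only} part not already covered by the preceding results. Your verification of the relations (braid relations via Theorem~\ref{thm:YBTSD}, far-commutativity and $t_it_j=t_jt_i$ via disjoint supports, and the mixed relations $t_i\sigma_j=\sigma_jt_{\tau_j(i)}$ via the sliding identities of Equation~\ref{eqn:leftsliding}) is correct as far as it goes, and matches what the paper treats as already established. But the target of $\Phi_\infty$ is ${\rm Aut}(X^{\otimes 2n})$, and the generators $t_i$ of the free abelian factor $\mathbb Z^n$ have inverses $t_i^{-1}$ in $\mathbb{FB}_n$; a group homomorphism therefore forces $\Phi_\infty(t_i)=\mathbbm 1^{\otimes(2i-2)}\otimes\theta\otimes\mathbbm 1^{\otimes(2n-2i)}$ to be invertible. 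You checked this for $R$ by citing Theorem~\ref{thm:Rinv}, but no analogous statement for $\theta$ appears anywhere in your argument, and it is not automatic: geometrically it encodes the cancellation of kinks in framed Reidemeister move~I, which is the one framed move not yet accounted for by the Yang--Baxter and sliding identities.

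The paper's proof is devoted entirely to this missing step. It exhibits an explicit candidate inverse
$$
\theta^{-1}(x\otimes y) = \tilde T(x^{(1)}\otimes y^{(2)}\otimes x^{(2)})\otimes \tilde T(y^{(1)}\otimes y^{(3)}\otimes x^{(3)}),
$$
built from the reversing operation $\tilde T$, and verifies $\theta^{-1}\theta=\mathbbm 1$ by an argument parallel to the proof that $\tilde R$ inverts $R$, but which additionally requires the ``mixed'' distributivity of $T$ against $\tilde T$ established in Lemma~\ref{lem:mixed}, together with Lemma~\ref{lem:cocomm} to reorder coproduct legs. Since $\theta$ interleaves the coproduct legs of \emph{both} tensor factors (unlike $R$, whose inverse only needed reversibility), the mixed lemma is genuinely needed and is not a formality. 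To repair your proposal you would need to import exactly this construction; without it, $\Phi_\infty$ is not known to land in the automorphism group and the representation is not defined.
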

\begin{proof}
	The only part of the theorem which does not follow from the results already obtained is the fact that the twist is invertible, so that the algebraic counterpart of diagrammatic cancellation of kinks holds. We define the inverse of $\theta$ by means of reversibility property of $T$, according to the assignment 
	\begin{eqnarray*}
		\theta^{-1} (x\otimes y) = \tilde T(x^{(1)}\otimes y^{(2)}\otimes x^{(2)})\otimes \tilde T(y^{(1)}\otimes y^{(3)}\otimes x^{(3)}).
	\end{eqnarray*}
The proof that $\theta^{-1}$ is the inverse of $\theta$ is very similar to the proof that $\tilde R$ is the inverse of $R$. The main difference is that we need to use the TSD property where $T$ and $\tilde T$ appear both in the same equation, as given in Lemma~\ref{lem:mixed}. One also needs to reorder the elements appropriately.
\end{proof}

Given a $2$ or $3$-Lie algebra $L$ of finite dimension, we construct an invariant of framed links as follows. For a given framed link $\mathcal L$ we choose a framed braid, following the notation of \cite{KS}, $b = (t^1\cdots t^n) \tau_n$ where $\tau_n\in \mathbb B_n$ is a braid in the braid group $\mathbb B_n$ on $n$ strings, such that the closure of $b$ is isotopic to $\mathcal L$. Define maps $R_{n,i} : X^{\otimes 2n} \longrightarrow X^{\otimes 2n}$ as the tensor product $R_{n,i} := \mathbbm 1^{\otimes 2(i-1)} \otimes R \otimes \mathbbm 1^{\otimes 2 (n-i)}$. Let us now write $\tau_n$ as a product of braid group generators $\sigma_i$, say $b = \prod_j \sigma_{i_j}^{k_j}$. Then we define the map $\Psi_b: X^{\otimes 2n} \longrightarrow X^{\otimes 2n}$ as $\Psi_b = \prod_j R_{n,i_j}^{k_j} (\theta^{t_1}\otimes \cdots \otimes \theta^{t_n})$, where the product here indicates composition of maps. The trace of the map $\Psi_b$ is denoted by $\Psi(\mathcal L)$. 

\begin{remark}
	{\rm
	We observe that it is now needed a finiteness condition on the dimension of $L$, since the trace of the operator $\Psi(\mathcal L)$ is not necessarily finite otherwise. Up to now, the constructions of this article did not depend on having an infinite or finite dimensional Lie algebra. 
}
\end{remark}

The reason to consider the trace of the operators described above is contained in the following. 
\begin{corollary}\label{cor:invariant}
	{\rm
	Let $L$ be an $n$-Lie algebra over $\mathbbm k$, where $n= 2,3$, and let $\mathcal L$ be a framed link which is the closure of a framed braid $b$. Then the map $\Psi_b$ depends only on the isotopy class of $\mathcal L$ and, therefore, $\Psi(\mathcal L)$ is a $\mathbbm k$-valued invariant of $\mathcal L$.  
}
\end{corollary}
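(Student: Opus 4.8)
The plan is to invoke the framed version of Markov's theorem (Lemma~1 of \cite{KS}, mentioned above) to reduce the claim to a finite list of local moves, and then to verify invariance of the scalar $\Psi(\mathcal L) = {\rm tr}(\Psi_b)$ under each of them. Concretely, two framed braids have isotopic closures if and only if they are related by a finite sequence of (a) the defining relations of the framed braid group, (b) conjugation $b \mapsto a b a^{-1}$ within a fixed $\mathbb{FB}_n$ (framed Markov move~I), and (c) (de)stabilization $b \mapsto b\,\sigma_n^{\pm 1}$ passing between $\mathbb{FB}_n$ and $\mathbb{FB}_{n+1}$ (framed Markov move~II). Well-definedness of $\Psi_b$ with respect to (a), that is, the fact that $\Psi_b = \Phi_n(b)$ depends only on the group element $b$ and not on the chosen word, is exactly the content of Theorem~\ref{thm:representation}, which asserts that $\Phi_\infty$ is a representation; here the sliding identities for $\theta$ established above are what guarantee that the mixed relation $t_i\sigma_j = \sigma_j t_{\tau_j(i)}$ is respected.

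Invariance under (b) is the easy part: since $\Phi_\infty$ is a representation into ${\rm Aut}(X^{\otimes 2n})$ and all operators involved are invertible (Theorem~\ref{thm:Rinv} for $R$, and the invertibility of $\theta$ proved in Theorem~\ref{thm:representation}), we have $\Psi_{aba^{-1}} = \Phi_n(a)\,\Psi_b\,\Phi_n(a)^{-1}$, so that ${\rm tr}(\Psi_{aba^{-1}}) = {\rm tr}(\Psi_b)$ by cyclicity of the trace. This is precisely where the finiteness hypothesis on $L$ enters, as it guarantees that the traces in question converge in $\mathbbm k$ and that cyclicity is available.

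The substance of the proof is invariance under (c). Here I would pass to the partial trace ${\rm tr}_n$ over the last tensor factor $X^{\otimes 2}$ (the $n$-th strand, consisting of two copies of $X$), and show that closing off the stabilized braid amounts to replacing $R^{\pm 1}$ acting on the last two strands by ${\rm tr}_n(R^{\pm 1})$. The key computation is to identify this partial trace: I expect ${\rm tr}_n(R^{\pm 1})$ to equal the twist $\theta^{\pm 1}$ acting on the remaining strand, up to the normalization made available by the finiteness condition, reflecting the geometric fact that a Markov stabilization by $\sigma_n^{\pm 1}$ becomes, after closure, a positive or negative kink, that is, a full twist of the ribbon, which is exactly what $\theta^{\pm 1}$ encodes. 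Establishing this identity is the step that genuinely uses the explicit structure of $T$ (via Equations~(\ref{eqn:1})--(\ref{eqn:5}) in the ternary case, and their analogues in the binary case), the reversibility property (Proposition~\ref{pro:reverse}), and the non-cocommutativity bookkeeping of Lemma~\ref{lem:cocomm} to move the surviving coproduct legs into position before tracing. Once ${\rm tr}_n(R^{\pm 1}) = \theta^{\pm 1}$ is known, the stabilization move converts the added crossing into a twist already accounted for by the framing generators, and the scalar is unchanged.

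The main obstacle I anticipate is precisely this partial-trace computation: one must verify that ${\rm tr}_n(R^{\pm 1})$ is the identity twisted by $\theta^{\pm 1}$ and not merely a scalar multiple of some other operator, and in particular that the positive and negative stabilizations contribute mutually inverse twists, so that a cancelling pair of kinks annihilates consistently with the framed Reidemeister~I replacement. Because $\Delta$ is not cocommutative, care is needed in tracking which coproduct legs survive the trace; I would handle this exactly as in the proof of Theorem~\ref{thm:Rinv}, using Lemma~\ref{lem:cocomm} to symmetrize the non-leading factors and the coalgebra-morphism property of $T$ and $\tilde T$ to collapse the counits, as in Proposition~\ref{pro:reverse}. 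With these moves checked, the trace ${\rm tr}(\Psi_b)$ descends to a function of the isotopy class of $\mathcal L$, yielding the desired $\mathbbm k$-valued invariant, in the spirit of the Yang-Baxter construction of \cite{Tur}.
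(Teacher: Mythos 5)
Your proposal follows essentially the same route as the paper: reduce to the framed Markov theorem of \cite{KS}, use Theorem~\ref{thm:representation} for well-definedness on the framed braid group, cyclicity of the trace for conjugation, and the relation between stabilization and the twist $\theta$ for the remaining move. The one substantive step you leave as an expectation rather than a computation --- the partial trace identity ${\rm tr}_n(R^{\pm 1}) = \theta^{\pm 1}$ --- is in fact the correct Markov-trace condition to isolate here, and you are more explicit about it than the paper, whose proof disposes of stabilization in a single sentence (``Markov's stabilization is used to introduce framings according to the definition of twist $\theta$''); neither you nor the paper actually verifies it, so your outline matches the paper's argument while flagging more honestly where the real work lies.
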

\begin{proof}
	
	In fact, this follows from the fact that for a chosen framed $n$-braid $b\in \mathbb {FB}_n$ whose closure is $\mathcal L$, $\Psi(\mathcal L)$ is the trace of $\Phi_n (b)\in Aut(X^{\otimes (2n)})$, since by definition $\Psi_b = \Phi_n(b)$. In fact, from Theorem~\ref{thm:representation} $\Psi(\mathcal L)$ is independent on the choice of $b$ among equivalent framed braids. One then has to verify the framed version of Markov's theorem, see \cite{KS} Lemma~1. But, conjugation does not change the trace and, moreover, Markov's stabilization is used to introduce framings according to the definition of twist $\theta$ given above. 
	
	\end{proof}

\begin{remark}
	{\rm 
We observe that the result obtained in Corollary~\ref{cor:invariant} would still hold true if we used a Lie algebra which is finitely generated projective module over a unitary ring $\mathbbm k$, rather than a finite dimensional vector space. In fact, as previously pointed out, the original work of Filippov introduced $n$-Lie algebras over unitary rings, rather than vector spaces. Although the main structure theorems for $n$-Lie algebras are known in the stronger case of vector spaces, there is an abundance of examples of $n$-Lie algebras over unitary rings that are not fields. 
}
\end{remark}

\section{ Examples, Computations and Deformations} 

In this section we provide some considerations on the Yang-Baxter operators constructed in  Section~\ref{sec:invariants}, and give computations for some low-dimensional Lie algebras. Moreover, we determine the twisting maps $\theta^n$ for arbitrary binary and ternary Lie algebras, and see that the former give rise to nontrivial twists, while the latter just give trivial ones. As a consequence, while ternary operations obtained by composing binary Lie algebras provide representations of the framed braid group with nontrivial twists, $3$-Lie algebras give representations of the (unframed) braid group. It is not yet clear whether  the Yang-Baxter operators $R$ obtained produce new invariants. Due to the presence of terms of type $(1,0)$ in the comultiplication used to construct $R$, it seems possible that the corresponding invariants would coincide with those given by the switching map $\tau: X\otimes X \longrightarrow X\otimes X$, although this is not yet settled. Nonetheless, it turns out that $R$ is generally not {\it gauge equivalent} to $\tau$. The possibility that the link invariants coincide with those obtained using the switching map, prompts us to consider deformations of the operators obtained so far in the next subsection. 

For $L$ a Lie algebra or $3$-Lie algebra, and $X = \mathbbm k \oplus L$ the TSD object constructed in Section~\ref{sec:LieTSD} and Section~\ref{sec:ternaryLie} respectively, we have that a basis $\{x_1,\ldots, x_n\}$ of $L$ determines a basis of $X$ as $\{(1,0), (0,x_1), \ldots , (0,x_n)\}$. We will use this notation for the computations in the rest of this section. 

We recall the following definition, see \cite{Eis}.  
\begin{definition}
	{\rm 
	Two Yang-Baxter operators $R_1, R_2: V\otimes V \longrightarrow V\otimes V$ are said to be gauge equivalent if there exists an invertible map $g: V\longrightarrow V$ such that $R_2 = (g\otimes g)^{-1}\circ R_1\circ(g\otimes g)$. 
}
\end{definition}
\begin{remark}
	{\rm 
Yang-Baxter operators are classified up to gauge equivalence. Moreover, we observe that the matrices of two gauge equivalent operators are similar, although similarity is in general a weaker condition than gauge equivalence. 
}
\end{remark}

Let $\frak g_2$ denote the only nontrivial $2$-dimensional Lie algebra, i.e. with bracket determined by $[e_1,e_2] = e_1$, where $e_i$ are the basis vectors of $\frak g_2$. We have the following.

\begin{proposition}\label{pro:gauge}
	Let $R$ denote the Yang-Baxter operator corresponding to $\frak g_2$, and let $\tau$ denote the switching map on $X = \mathbb C\oplus \frak g_2$. Then, $R$ and $\tau$ are not gauge equivalent. 
\end{proposition}
\begin{proof}
	A computer aided computation shows that the Jordan normal form of $R$ contains $2$-dimensional Jordan blocks, whereas $\tau$ has a diagonal Jordan normal form, i.e. it consists of $1$-dimensional Jordan blocks. Since it is known that two matrices over $\mathbb C$ are similar if and only if they have the same Jordan normal form, it follows that $R$ and $\tau$ are not similar, and in particular they are not gauge equivalent. 
\end{proof}

\begin{remark}
	{\rm 
	From Proposition~\ref{pro:gauge} we have that the Yang-Baxter operators constructed in this article can be algebraically different from the switching maps, which are a well known class of nontrivial solutions to the Yang-Baxter equations. 
}
\end{remark}

%Despite being non-equivalent, it is not clear whether $R$ and $\tau$ can produce distinct invariants. This fact is a consequence of the appearence of tensorands of type $(1,0)$ each time that $R$ is applied. 

Before concluding this subsection, we consider the twisting map $\theta$ in the case when $T$ is obtained from compositions of binary brackets, and when $T$ is derived from a ternary Lie bracket. 

\begin{proposition}\label{pro:binarytwist}
	Let $L$ be a Lie algebra, and let $T$ denote the associated TSD operation. Then, if $\theta$ is the twisting map obtained from $T$ as in Section~\ref{sec:invariants}, we have 
	$$
	\theta^n = \mathbbm 1 + n h,
	$$
	where $h((a,x)\otimes (b,y)) = (0,[x,y])\otimes (1,0) + (1,0)\otimes (0,[y,x])$, for all $n\in \mathbbm Z$.
\end{proposition}
\begin{proof}
	By direct computation we see that $\theta((a,x)\otimes (b,y)) = (a,x)\otimes (b,y) + (0,[x,y])\otimes (1,0) + (1,0)\otimes (0,[y,x])$. Applying $\theta$ $n$ times then gives the result when $n$ is non-negative. For negative $n$, one proceeds in the same way, with the only difference that negative signs appear due to the definition of $\tilde T$ and $\theta^{-1}$. Specifically we have $\theta^{-1}((a,x)\otimes (b,y)) = (a,x)\otimes (b,y) - (0,[x,y])\otimes (1,0) - (1,0)\otimes (0,[y,x])$.
\end{proof}

\begin{remark}
	{\rm 
It follows from Proposition~\ref{pro:binarytwist} that the representation of Theorem~\ref{thm:representation} assigns nontrivial twisting maps to the generators $t_i$ of the framed braid group.
}
\end{remark}

\begin{proposition}
	For a $3$-Lie algebra $\mathfrak g$ and $X$ as above, the representation in Theorem~\ref{thm:representation} factors through the infinite braid group $\mathbb B_\infty$. 
\end{proposition}
\begin{proof}
	Computing the twist map $\theta$ as in Proposition~\ref{pro:binarytwist}, we see that $\theta^n = \mathbbm 1$. Therefore the map $\Phi_{\infty}$ of Section~\ref{sec:invariants} maps the generators $(0,\ldots,0, 1,0, \ldots, 0)$ of the framed braid group to the unit of ${\rm Aut} (X^{2n})$ for all $n$. This implies that $\Phi_{\infty}$ factors through the infinite braid group $\mathbb B_\infty$.
\end{proof}

\subsection{Deformations} 

We now consider the problem of deforming the Yang-Baxter operators constructed above, in order to obtain new solutions to the Yang-Baxter equation. Our main objective is to have a method to derive new representations of the (framed) braid group, and possibly deformed invariants of links. We do not currently know to what extent deforming the Yang-Baxter operators produces deformed invariants, and a computational investigation is needed to this perspective. We defer the proofs of the facts used in this subsection to the Appendix, in order to provide a smoother exposition. We mention that the present deformation approach is part of a much more general theory that uses the TSD cohomology of TSD structures, in the sense of Section~7 of \cite{EZ}, and will appear elsewhere in its totality. This is based on known methods of Gerstenhaber \cite{Ger} that are also known to work for Lie algebras \cite{NR}. The resulting deformation of Yang-Baxter operators has a parallelism with \cite{Eis}. We present such results for compositions of binary Lie algebras, for simplicity, although the basic idea is applicable in general with only few formal changes. 

To this end, let $X$ denote a TSD object as above, where $L$ is a (binary) Lie algebra over $\mathbbm k$. Then, let $\mathbbm k[[\hbar]]$ denote the ring of power series over the formal variable $\hbar$, and coefficients in $\mathbbm k$. Here, $\hbar$ has the meaning of a deformation parameter and, since we are interested only in ``infinitesimal'' deformations at first, we quotient out $\mathbbm k[[\hbar]]$ by the ideal $(\hbar^2)$, therefore discarding deformations of higher order. Let us set $\mathbbm k' := \mathbbm k[[\hbar]]/(\hbar^2)$. We now extend the coefficients of $X$ to $\mathbbm k'$, and denote the resulting module by $X'$. Our objective now is that of finding $\Delta'$ and $T'$ on $X'$ that coincide with $\Delta$ and $T$ when restricting coefficients to $\mathbbm k$. We consider maps of the form $\Delta' = \Delta + \hbar \psi$ for some $\mathbbm k$-linear $\psi: X \longrightarrow X\otimes X$, for the comultiplication. Since $T = q\circ (q\otimes \mathbbm 1)$, i.e. it is obtained by composing binary operations, we consider deformations of $q$ as $q' = q + \hbar \phi$ for some $\phi: X\otimes X \longrightarrow X$, and then set $T' = q'\circ (q'\otimes \mathbbm 1)$. 

The final objective of the deformations is that of obtaining a deformed Yang-Baxter operator with respect to the one corresponding to $\Delta$ and $T$, according to the constructions found in Section~\ref{sec:LieTSD} and Section~\ref{sec:ternaryLie}. Constraining $\Delta'$ and $T'$ to satisfy the same compatibility conditions that $\Delta$ and $T$ satisfied, the new Yang-Baxter operator will automatically be a deformation of $R$ corresopnding to $\Delta$ and $T$. It is our objetive now to show the constraints on $\phi$ and $\psi$ needed to achieve this. 

\begin{lemma}\label{lem:deformedcoass}
	Let $(X,\Delta, \epsilon)$ be a coalgebra over $\mathbbm k$, and let $\Delta'$ be a deformation over $\mathbbm k'$ as above. Then, $\Delta'$ is coassociative if and only if 
	$$
	(\mathbbm 1\otimes \Delta)\psi + (\mathbbm 1\otimes \psi)\Delta = (\Delta\otimes \mathbbm 1)\psi + (\psi\otimes \mathbbm 1)\Delta,
	$$
	and
	$$
	(\epsilon\otimes \mathbbm 1)\psi = (\mathbbm 1\otimes \epsilon)\psi = 0.
	$$
\end{lemma}

\begin{remark}
	{\rm 
		We note that we are not deforming the counit $\epsilon$, although this is generally possible. We will not consider this possibility here, but we mention that allowing this would change the second group of equations in Lemma~\ref{lem:deformedcoass} and could provide a wider class of solutions for $\psi$. 
}
\end{remark}

\begin{example}\label{ex:deformedcoass}
	{\rm 
  With the comultiplication $\Delta$ considered in this article, thediagonal map $\psi : X \longrightarrow X\otimes X$ defined by $\psi(a,x) = (0,x)\otimes (0,x)$ satisfies the equations of Lemma~\ref{lem:deformedcoass}. So, the map $\Delta'(a,x) = (a,x)\otimes (1,0) + (1,0)\otimes (0,x) + \hbar (0,x)\otimes (0,x)$ is coassociative. The counit $\epsilon$ is extended to coefficients $\mathbbm k'$ trivially and it satisfies the counit axioms with respect to $\Delta'$. 
}
\end{example}

\begin{lemma}\label{lem:deformedSD}
	Let $(X,\Delta,q)$ be a binary self-distributive object, and let $\Delta'$ be such that $(X',\Delta')$ is a coalgebra. Let $\shuffle$ indicate the shuffle map corresponding to binary self-distributivity.  Then $q' = q + \hbar \phi$ satisfies the self-distributive condition with respect to $(X',\Delta')$ if and only if 
	\begin{eqnarray*}
	\lefteqn{q(\phi\otimes \mathbbm 1) + \phi(q\otimes \mathbbm 1)}\\
	&=& q[\phi\otimes q+q\otimes \phi]\shuffle (\mathbbm 1\otimes \Delta) + \phi(q\otimes q)\shuffle (\mathbbm 1\otimes \Delta) + q(q\otimes q)\shuffle (\mathbbm 1\otimes \psi).
	\end{eqnarray*}
Moreover, $q'$ is a morphism of coaglebras if and only if 
\begin{eqnarray*}
\Delta \phi + \psi q
&=& [\phi\otimes q + q\otimes \phi](\mathbbm 1\otimes \tau \otimes \mathbbm 1)(\Delta\otimes \Delta)\\
&& + (q\otimes q)(\mathbbm 1\otimes \tau \otimes \mathbbm 1)(\psi\otimes \Delta + \Delta\otimes \psi),
\end{eqnarray*}
where $\tau$ indicates the switching map.
\end{lemma}

\begin{example}\label{ex:deformedq}
	{\rm 
	Let $L$ be a Lie algebra. Let $X = \mathbbm k\oplus L$ denote the TSD object associated to $L$ as above. The map $\phi: X\otimes X\longrightarrow X$ defined on simple tensors as $\phi((a,x)\otimes (b,y)) = (0,[x,y])$ satisfies the equations of Lemma~\ref{lem:deformedSD} with respect to the deformation $\psi$ of $\Delta$ found in Example~\ref{ex:deformedcoass}. Therefore $q' = q + \hbar \phi$ is a deformation of $q$ with respect to the comultiplication $\Delta$. More generally, if $\sigma$ denotes any Lie algebra $2$-cocycle of $L$ with coefficients in $L$ itself, then $q' = q + \hbar \phi$, with $\phi((0,x)\otimes (0,y)) = (0,\sigma(x, y))$, is a deformation of $q'$. As it will be explained in more detail below, this example is part of a more general theory, where the deformations of an $n$-ary self-distributive operation are classified by a self-distributiive cohomology theory analogous to the one appeared in Section~7 of \cite{EZ}, and there is a map from Lie algebra cohomology to self-distributive cohomology. We will not consider the general case in the present article. 
}
\end{example}

We are interested in deforming the comultiplication $\Delta$, as well as the operation $q$.

%\begin{example}\label{ex:deformed2d}
%	{\rm 
%	Let $\frak g_2$ denote the $2$-dimensional Lie algebra considered above, with coefficients in characteristic $2$. Let us consider a deformed $q' = q + \hbar \phi$ as in Example~\ref{ex:deformedq}. Let $\psi : X \longrightarrow \frak g_2\otimes \frak g_2\subset X\otimes X$ be defined on basis vectors as $\psi((1,0)) = 0$, $\psi((0,e_1)) = (0,e_1)\otimes (0,e_2) + (0,e_2)\otimes (0,e_1)$ and $\psi((0,e_2)) = (0,e_2)\otimes (0,e_2)$. Then, deform $\Delta$ as $\Delta' = \Delta + \hbar \psi$. By direct computation one sees that $\Delta'$ is coassociative, i.e. Lemma~\ref{lem:deformedcoass} holds, and moreover the equations in Lemma~\ref{lem:deformedSD} hold as well. Then, $(X,\Delta', q')$ is a self-distributive object. We defer the computations to the Appendix. 
%}
%\end{example}

\begin{example}\label{ex:deformed2d}
	{\rm 
		Let $\frak g_2$ denote the $2$-dimensional Lie algebra considered above, with coefficients in a field (or ring) $\mathbbm k$ of characteristic $2$. Let us consider a deformed $q' = q + \hbar \phi$ as in Example~\ref{ex:deformedq}. Let $\psi : X \longrightarrow \frak g_2\otimes \frak g_2\subset X\otimes X$ be defined on basis vectors as $\psi((1,0)) = 0$, $\psi((0,e_1)) = \alpha (0,e_1)\otimes (0,e_2) + \alpha(0,e_2)\otimes (0,e_1)$ and $\psi((0,e_2)) =\beta(0,e_1)\otimes (0,e_1) + \alpha(0,e_2)\otimes (0,e_2)$. Then, deform $\Delta$ as $\Delta' = \Delta + \hbar \psi$. By direct computation one sees that $\Delta'$ is coassociative, i.e. Lemma~\ref{lem:deformedcoass} holds, and moreover the equations in Lemma~\ref{lem:deformedSD} hold as well. Then, $(X,\Delta', q')$ is a self-distributive object. We defer the computations to the Appendix. This is the only deformation of $\Delta$ that is compatible with the $q'$ defined using the Lie bracket. 
	}
\end{example}

\begin{remark}\label{rmk:deformation}
	{\rm 
	The construction of Example~\ref{ex:deformed2d} can be applied more generally to solvable Lie algebras. In fact, assume that $L$ is a solvable Lie algebra such that $[L_r,L_r] = 0$, where $L_r$ denotes the $r^{\rm th}$-term in the derived series of $L$. Define $\psi : X \longrightarrow L_r\otimes L_r \subset X\otimes X$ such that the second equation in Lemma~\ref{lem:deformedSD} holds. Then, the deformed structure is a self-distributive object. This is a useful method to attempt deforming $(X,\Delta,q)$ for some special classes of Lie algebras.  
}
\end{remark}
We apply the same paradigm in the following.

\begin{example}\label{ex:Heisenberg}
	{\rm
		
	Let $\mathcal H$ denote the Heisenberg Lie algebra on $3$ generators $e_1, e_2, e_3$ with bracket defined by $[e_1,e_2] = e_3$, and zero otherwise. Let $X = \mathbb C \oplus \mathcal H$ denote the SD object associated to $\mathcal H$. Let $q' = q + \hbar \phi$ where $\phi(a,x)\otimes (b,y) = (0,[x,y])$ and $\Delta' = \Delta = \hbar \psi $ where $\psi(1,0) = \psi (0,e_3) = 0$, $\psi(0,e_1) =  \alpha_1^{23}[(0,e_2)\otimes (0,e_3) - (0,e_3)\otimes (0,e_2)] + \alpha_1^{33}(0,e_3)\otimes (0,e_3)$, and $\psi(0,e_2) = \alpha_2^{13}[(0,e_1)\otimes (0,e_3) - (0,e_3)\otimes (0,e_1)]  + \alpha_2^{33} (0,e_3)\otimes (0,e_3)$, for arbitrary coefficients $\alpha_i^{jk}\in \mathbb C$. Then, a (tedious) direct computation analogous to the one for Example~\ref{ex:deformed2d} shows that the equations of Lemma~\ref{lem:deformedcoass} and Lemma~\ref{lem:deformedSD} are satisfied. Therefore, we obtained a deformed SD object.
}
\end{example}

The same theory developed for $(X,\Delta,q)$ can be applied to the deformations $(X,\Delta',q')$, and we can therefore obtain ternary operations and Yang-Baxter operators, along with their corresponding representations of the (framed) braid group. All these structures are seen to coincide with the ones obtained from $(X,\Delta,q)$ up to terms containing $\hbar$. It is highly desirable to obtain more deformations, especially for the ternary case directly. Further computations are needed still. 

Finally, we conclude observing that the deformations of Example~\ref{ex:deformed2d} and Example~\ref{ex:Heisenberg} produce perturbations of the associated braid group representations. It is not currently clear whether these deformations perturb the link invariants as well. We proceed as follows. Let $R_\hbar = R + \hbar A$ denote the deformed Yang-Baxter operator. The matrix of $R_\hbar$ is invertible since $det(R_\hbar) = det(R) + \hbar p(\hbar)$ for some polynomial $p(\hbar)$ in $\hbar$, and this is a unit of $\mathbbm k [[\hbar]]/(\hbar^2)$. In fact, $det(R)$ is a unit of $\mathbbm k$, as a consequence of the invertibility of $R$, and $det(R)^{-1} - det(R)^{-2}\hbar p(\hbar)$ is the required inverse of $det(R_\hbar)$. In fact, one can construct a reverse to $q'$ as in Section~\ref{sec:LieTSD}, but this would require an unnecessary amount of further computations.

\subsection{A few concluding remarks}

We conclude this article by pointing out a few facts that might serve as possible directions of investigation in future work. 

First, we observe that the YB operators arising from $3$-Lie algebras, i.e. ternary operations that do not factor as composition of binary operations, have trivial twists. This is a direct consequence of the fact that the TSD operations used to construct the YB operators do not contain binary terms. These are, in fact, the terms that determine the non-triviality of twists in the case of TSD operations that are obtained by composing binary operations. If we interpret the composition of binary brackets as a ternary bracket, it is natural to ask whether the construction of the present article works when starting with a $3$-homotopy Lie algebra, i.e. a homotopy Lie algebra where brackets of dimension higher than $3$ vanish. We wonder whether the YB operators obtained following this proceedure would retain the characteristic of having non-trivial twists, as in the case of composed binary brackets, due to the appearance of binary operations that accompany the ternary bracket. 

As previously mentioned, the theory of deformations can be rephrased in terms of a cohomology theory for binary and ternary SD objects. See for instance \cite{EZ} for a general version of these in symmetric monoidal categories. Current work by the second author of this article shows that if $\Delta$ is not deformed, it is possible to construct a map from the second cohomology group of a Lie algebra with coefficients in the algebra itself, to the SD second cohomology group of the self-distributive object associated to the Lie algebra. It is not yet clear whether such a construction is possible when the comultiplication is deformed as well, as considered above. Very few direct computations are known to us, and having a principled way of constructing cocycles would be of great use, as they produce deformed representations of the (framed) braid group, and might consequently result as well in deformed invariants. Moreover, it is known that algebras can be deformed infinitely many times for all degrees of $\hbar$. It is of great interest to produce such deformations of higher order for the SD objects of this article, as they would provide invariants that are series in $\hbar$.

In addition, it is relevant to understand whether the undeformed invariant coincides with the trace of the transposition map in general, and whether the deformed invariants are related to previously known invariants. At present, we have very few computational indications of the former fact, and no understanding of the latter. We mention, in conclusion, that the invariants associated to deformed structures by means of SD cohomology constitute a Lie theoretic version of the well known cocycle invariants of \cite{CJKLS}.

\appendix 

\section{ Proofs of Lemmas~\ref{lem:deformedcoass} and \ref{lem:deformedSD}}

\begin{proof}[Proof of Lemma~\ref{lem:deformedcoass}]
	
	We need to verify that $\Delta'$ is coassociative. We have
	\begin{eqnarray*}
	(\mathbbm 1\otimes \Delta') \Delta' = (\mathbbm 1\otimes \Delta) \Delta + \hbar (\mathbbm 1\otimes \Delta)\psi + (\mathbbm 1\otimes \psi)\Delta + \hbar^2 (\mathbbm 1\otimes \psi)\psi \\
		(\Delta'\otimes \mathbbm 1) \Delta' = 	(\Delta\otimes \mathbbm 1)  \Delta + \hbar (\Delta \otimes \mathbbm 1)\psi + (\psi \otimes \mathbbm 1)\Delta + \hbar^2 (\psi \otimes \mathbbm 1)\psi.
	\end{eqnarray*}
	Equating the two results, using the fact that $\Delta$ is coassociative and considering that $\hbar^2 = 0$, we obtain the first equation of the statement of the lemma. The second set of equations comes similarly by imposing the counit axiom on $\Delta'$ and $\epsilon$. 
\end{proof}

\begin{proof}[Proof of Lemma~\ref{lem:deformedSD}]
	We need to impose the self-distributivity condition on $q' = q + \hbar \phi$. Omitting terms of quadratic or higher order for $\hbar$, we have for the LHS and RHS of self-distributivity
	\begin{eqnarray*}
	q'(q'\otimes \mathbbm 1) = q(q\otimes \mathbbm 1) + \hbar q(\phi\otimes \mathbbm 1) + \hbar \phi(q\otimes \mathbbm 1),
	\end{eqnarray*}
	and 
	\begin{eqnarray*}
	\lefteqn{q'(q'\otimes q')\shuffle (\mathbbm 1^{\otimes 2}\otimes \Delta')} \\
	&=&  q(q\otimes q)\shuffle (\mathbbm 1^{\otimes 2}\otimes \Delta) + \hbar q[\phi\otimes q+q\otimes \phi]\shuffle \Delta\\
	&& + \hbar \phi(q\otimes q)\shuffle \Delta + \hbar q(q\otimes q)\shuffle \psi.
	\end{eqnarray*}
	Equating them and using the fact that $q$ is self-distributive, we obtain the first equation of the lemma. The second equation is obtained following the same procedure, to impose the condition that $q'$ is a coalgebra morphism with respect to $\Delta'$.  
	\end{proof}

    \section{ Computations for Example~\ref{ex:deformed2d} }
    
    \begin{proof}[Computations for Example~\ref{ex:deformed2d}]
    	First, we need to show that $\Delta' = \Delta + \hbar \psi$ is coassociative. We do so by considering the first equation in Lemma~\ref{lem:deformedcoass} on basis vectors $(1,0)$. On $(1,0)$ and $(0,e_2)$ both sides of the equation vanish, and the equation holds. On $(0,e_1)$ we compute 
    	\begin{eqnarray*}
    	\lefteqn{(\psi\otimes \mathbbm 1)\Delta(0,e_1) + (\Delta\otimes \mathbbm 1)\psi(0,e_1)} \\
    	&=& \alpha[(0,e_1)\otimes (1,0) \otimes (0,e_2) + (1,0)\otimes (0,e_1) \otimes (0,e_2) \\
    	&& + (0,e_2)\otimes (1,0)\otimes (0,e_1) + (1,0)\otimes (0,e_2)\otimes (0,e_1)\\
    	&& + (0,e_1)\otimes (0,e_2)\otimes (1,0) + (0,e_2)\otimes (0,e_1)\otimes (1,0)],
    	\end{eqnarray*}
    	and 
    	\begin{eqnarray*}
    	\lefteqn{(\mathbbm 1\otimes \psi)\Delta (0,e_1) + (\mathbbm 1\otimes \Delta)\psi(0,e_1) }\\
    	&=& \alpha[(0,e_1)\otimes (0,e_2)\otimes (1,0) + (0,e_1)\otimes (1,0) \otimes (0,e_2) \\
    	&& + (0,e_2)\otimes (0,e_1)\otimes (1,0) + (0,e_2)\otimes (1,0)\otimes (0,e_1)\\
    	&& + (1,0)\otimes (0,e_1) \otimes (0,e_2) + (1,0)\otimes (0,e_2)\otimes (0,e_1)],
    	\end{eqnarray*}
    which are seen to coincide. The computation on $(0,e_2)$ is similar. It is also clear that $\Delta'$ is compatible with the counit, since $\psi$ maps into $\frak g_2\otimes \frak g_2$, over which $(\mathbbm 1\otimes \epsilon)$ and $(\epsilon \otimes \mathbbm 1)$ both vanish. Then, we have to verify that the conditions of Lemma~\ref{lem:deformedSD} are verified. The first equation reduces to $q(q\otimes q)\shuffle \psi$, since from the Jacobi identity (or more generally the $2$-cocycle condition) it follows that $q(\phi\otimes \mathbbm 1) + \phi(q\otimes \mathbbm 1) = q[\phi\otimes q+q\otimes \phi]\shuffle \Delta + \phi(q\otimes q)\shuffle \Delta$. But since $\frak g_2$ is solvable, we have that the iterated bracket corresponding to the composition $q(q\otimes q)\shuffle \psi = 0$  vanishes, and therefore $q'$ satisfies the self-distributive condition with respect to $\Delta'$. It is left to show that $q'$ and $\Delta'$ are compatible, i.e. that the second equality in Lemma~\ref{lem:deformedSD} holds. This is a direct (and tedious) computation on the basis vectors. Let us set $\Phi = \Delta \phi + \psi q$ and $\Lambda =   [\phi\otimes q + q\otimes \phi](\mathbbm 1\otimes \tau \otimes \mathbbm 1)(\Delta\otimes \Delta) + (q\otimes q)(\mathbbm 1\otimes \tau \otimes \mathbbm 1)(\psi\otimes \Delta + \Delta\otimes \psi)$ to shorten notation. We have 
    \begin{eqnarray*}
    	\Phi(1,0)\otimes (1,0) = \Lambda (1,0)\otimes (1,0) = 0,
    \end{eqnarray*} 
	 as well as 
	  \begin{eqnarray*}
	  	\Phi(1,0)\otimes (0,e_i) = \Lambda (1,0)\otimes (0,e_i) = 0.
	  \end{eqnarray*} 
  	Then, we have 
  	\begin{eqnarray*}
  	\Phi(0,e_i)\otimes (1,0) = \alpha_i[\psi(0,e_i) = (0,e_1)\otimes (0,e_2) - (0,e_2)\otimes (0,e_1)]
  	\end{eqnarray*}
  	which coincides with 
  	\begin{eqnarray*}
  	\lefteqn{\Lambda (0,e_i)\otimes (1,0)}\\
  	& = & [\phi\otimes q + q\otimes \phi][(0,e_i)\otimes (1,0)\otimes (1,0)\otimes (1,0)\\
  	&& + (1,0)\otimes (1,0)\otimes (0,e_i) \otimes (1,0)]\\
  	&& + (q\otimes q)[\alpha_1 (0,e_1)\otimes (1,0)\otimes (0,e_2)\otimes (1,0) \\
  	&& - \alpha_1 (0,e_2)\otimes (1,0)\otimes (0,e_1)\otimes (1,0) ].
  	\end{eqnarray*}
	On tensors $(0,e_i)\otimes (0,e_j)$ one computes 
	$$
	\Phi(0,e_i)\otimes (0,e_j) = \begin{cases}
	0 \ \ {\rm if} \ \ i = j\\
	(-1)^{i+1}\{\alpha_1(0,e_1)\otimes (0,e_2) + \alpha_1(0,e_2)\otimes (0,e_1)\\
	+  (0,e_1)\otimes (1,0) + (1,0)\otimes (0,e_1)\}\ \ {\rm otherwise }
	\end{cases}
	$$
	We also compute 
	\begin{eqnarray*}
	\lefteqn{\Lambda(0,e_i)\otimes (0,e_j)} \\
	&=& (q\otimes q)[\alpha_i (0,[e_i,e_j])\otimes (0,e_2) + \alpha_i(0,e_1)\otimes (0,[e_2,e_j])\\
	&& -\alpha_i (0,[e_2,e_j])\otimes (0,e_1) - \alpha_i (0,e_2)\otimes (0, [e_1,e_j]) \\
	&& + (0,[e_i,e_j]) \otimes (1,0) + (1,0)\otimes (0,[e_i,e_j]). 
	\end{eqnarray*}
	The latter is seen to coincide with the former for $i,j = 1,2$, which concludes the computation. To verify the claim regarding characteristic $2$, one proceeds in the same way. The term $\beta(0,e_2)\otimes (0,e_2)$ appears only in the last equation with a factor of $2$, and will therefore vanish. 
	\end{proof}

	\end{document}